\shorttitle{Markov Kernels for OT}
\newcommand{\Pro}{\mathscr{P}}
\newcommand{\ACP}{\mathscr{ACP}}
\renewcommand{\supp}{\operatorname{supp}}
\newcommand{\dist}{\operatorname{dist}}
\newcommand{\proj}{\operatorname{proj}}
\newcommand{\g}{\gamma}
\newcommand{\pth}{$p^{th}$}
\begin{document}

\title{Markov Kernels in Optimal Transport via\\ Extending c-Cyclic Monotonicity}

\author[U.S. Army Corps of Engineers]{James G Ronan}
\address{Cold Regions Research and Engineering Laboratory, Hanover, NH 03755}

\begin{abstract}
In this paper we show that we can use Markov kernels as a model for optimal transport. 
This new framework can be easily translated into the standard coupling formulation of optimal transport, and we show that we can use a coupling as a Markov kernel for many optimal transport problems. 
Using kernels allows us to extend optimal transport to signed measures and treats the support of the measure as the salient feature.
This approach reveals additional structure for one-dimensional signed optimal transport. 
\end{abstract}

\keywords{Optimal Transport, Markov Kernels}
\ams{49Q22}{60G07}

\section{Introduction}\label{sec1}
The study of optimal transport has a long history dating from the 1781 work of Mongè in \cite{monge1781memoire} which established what corresponds to the transport map formulation of the subject.
Throughout optimal transport's subsequent development, the problem has been rephrased multiple times, each time revealing a new facet of the theory and spurring new developments and applications.
One particularly important example of this was Kantorovich's reformulation of the problem in 1942 \cite{kantorovich1942translocation} which cast the problem in terms of joint probability measures by introducing the notion of the optimal coupling.
This enabled optimal transport problems to be formulated as linear programming problems and made them computationally feasible. 

In the 1990's and early 2000's multiple authors began to explore geometric structure in optimal transport and its connection to fluid dynamics \cite{benamou2000computational, brenier1991polar, mccann1997convexity,  otto2001geometry}.
Entropic regularization and the Sinkhorn algorithm, \cite{cuturi2013sinkhorn}, led to even more computational tractability and spurred work in applying optimal transport to image analysis, \cite{parno2019remote,solomon2015convolutional}. 
Semi-discrete optimal transport limits the class of distributions considered and simplifies the dual-problem to optimal transport. 
These advantages fostered interest in new application domains and brought tools from computational geometry into the field \cite{aurenhammer1998minkowski,de2012blue, kitagawa2016convergence, levy2015numerical, levy2010p, levy2018notions}. 

Brief introductions to optimal transport can be found in \cite{ambrosio2013user, mccann2011five, santambrogio2014introduction}, while more thorough treatments are available in \cite{ambrosio2008gradient, peyre2019computational, santambrogio2015optimal, villani2003topics, villani2008optimal}.

Recently there has been increased interest in signed optimal transport, which applies techniques from optimal transport to signed measures. 
Signed optimal transport in one dimension has already proven itself useful in estimating parameters for seismic imaging, \cite{engquist2013application, engquist2016optimal, engquist2018seismic}, where multiple techniques were used to change the signed signal into a positive probability measure.
Those techniques included splitting into components, exponentiating, and adding a large constant.
However, such approaches for signed optimal transport have been application specific and consequently ad hoc and a more general theory for signed measures has yet to be established.

This paper establishes a new interpretation of optimal transport through the use of Markov kernels, a special case of transition kernels. 
The Markov kernel framework enables a new approach to signed optimal transport that is distinct from those described in \cite{ambrosio2011gradient, mainini2012description, piccoli2013transport, piccoli2014generalized, piccoli2016properties, piccoli2019wasserstein}.
We develop a path from standard optimal couplings to optimal transport kernels and show how the two interpretations are related. 
The interpretability between kernels and couplings enables current techniques that find optimal couplings to be re-purposed to make optimal transport kernels.

This perspective naturally leads to the treatment of signed optimal transport  by sending a signed measure through an optimal transport kernel. 
The connections between the kernel and optimal couplings ensure that the signed optimal couplings retain the geometric information that makes optimal transport useful. 
This use of optimal transport kernels recreates the capability of transport maps, while retaining the ease and computational advantages of searching for an optimal coupling due to the connections between kernels and couplings. 

The rest of this paper is organized as follows. In Section~\ref{sec2} we provide the necessary background on optimal transport.
Section~\ref{sec3} presents results extending optimal couplings towards the goal of introducing Markov kernels for optimal transport and
Section~\ref{sec4} describes how we can use Markov kernels for optimal transport, including their connection to optimal couplings.
Section~\ref{sec5} applies optimal transport kernels to signed measures, and provides additional background for one-dimensional optimal transport before considering one-dimensional signed optimal transport. Our work is summarized in Section~\ref{sec:con}.

\section{Background on Optimal Transport}\label{sec2}

We now provide the essential background on optimal transport that is need for the work that follows. 
General background material far beyond what we present here can be found in \cite{ambrosio2013user, mccann2011five, santambrogio2014introduction}.

Optimal transport considers probability measures on Polish spaces $(X,\mathscr{B}_X)$ and $(Y, \mathscr{B}_Y)$. 
Let the set of probability measures on $X$ be denoted by $\Pro(X)$. 
If $\mu\in \Pro(X)$ and $\nu \in \Pro(Y)$, then the optimal transport cost is the infimum of the functional $C(\gamma)$.
The functional $C(\gamma)$ is defined by 
\begin{equation}
\label{eq:kantorovich}
C(\gamma) = \int_{X\times Y} c(x,y)d\gamma(x,y)    
\end{equation}
where $\gamma \in \Pro(X\times Y)$ is a coupling of $\mu$ and $\nu$. 
A measure $\gamma\in \Pro(X\times Y)$ is a \textbf{coupling} of $\mu$ and $\nu$ if $(\proj_X)_{\#}\g = \mu$ and $(\proj_Y)_{\#} \g = \nu$, with $\proj_X$ and $\proj_Y$ being the projection maps from $X\times Y$ to $X$ and $Y$ respectively.
The set of couplings in $\Pro(X\times Y)$ is denoted $\Pi(\mu,\nu)$.
Any $\g \in \Pi(\mu,\nu)$ that obtains the infimal value is called an \textbf{optimal coupling} of $\mu$ and $\nu$.
The \textbf{cost function} $c(x,y)$ is assumed to be continuous and non-negative for reasons discussed in Section~\ref{sec3}.
So long as $c(x,y)$ is lower semi-continuous and bounded from below, then there exists an optimal coupling, as stated in Theorem~\ref{thm:kantdual}. 
Allow $C(\mu,\nu)$ to refer to the optimal transport cost between $\mu$ and $\nu$.

We now cite two important theorems on the structure of optimal couplings.

\begin{theorem}[Optimality is inherited by restriction {\cite[Pg. 46]{villani2008optimal}}]
\label{thm:restriction}
Let $(X,\mu)$ and $(Y,\nu)$ be two Polish spaces; let $a:X\mapsto \mathbb{R}~\cup\{-\infty\}$ and $b:Y\mapsto \mathbb{R}~\cup \{-\infty\}$ be two upper semi-continuous functions such that $a\in L^1(\mu)$, $b\in L^1(\nu)$.
Let $c:X\times Y\mapsto \mathbb{R}~\cup\{+\infty\}$ be a lower semi-continuous cost function, such that $c(x,y)\geq a(x)+b(y)$ for all $x,y$.
Let $C(\mu,\nu)$ be the optimal transport cost from $\mu$ to $\nu$. Assume that $C(\mu,\nu)<\infty$ and let $\gamma\in \Pi(\mu,\nu)$ be an optimal coupling. 
Let $\tilde\gamma$ be a non-negative measure on $X\times Y$ such that $\tilde \gamma\leq\gamma$ and $\tilde\gamma[X\times Y]>0$. 
Then the probability measure
\[
\gamma':= \frac{\tilde\gamma}{\tilde\gamma[X\times Y]}
\]
is an optimal coupling between its marginals. 

Moreover, if $\gamma$ is a unique optimal coupling between $\mu$ and $\nu$, then $\gamma'$ is a unique optimal coupling between its marginals.
\end{theorem}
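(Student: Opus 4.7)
The plan is to proceed by contradiction: assume there exists a coupling of the marginals of $\gamma'$ with strictly smaller cost, and glue it back into $\gamma$ to construct a coupling of $\mu$ and $\nu$ that beats $\gamma$, violating its optimality.

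Concretely, set $\alpha := \tilde\gamma[X\times Y] \in (0,1]$ and let $\mu' := (\proj_X)_\# \gamma'$ and $\nu' := (\proj_Y)_\# \gamma'$. Suppose $\gamma'$ is not optimal in $\Pi(\mu',\nu')$, so that there is some $\gamma'' \in \Pi(\mu',\nu')$ with $C(\gamma'') < C(\gamma')$. Define
\[
\hat\gamma := (\gamma - \tilde\gamma) + \alpha\, \gamma''.
\]
The first step is to verify $\hat\gamma \in \Pi(\mu,\nu)$: the marginals of $\gamma - \tilde\gamma$ are $\mu - \alpha\mu'$ and $\nu - \alpha\nu'$ respectively (using $\tilde\gamma \leq \gamma$ to ensure these are non-negative measures), while $\alpha\gamma''$ has marginals $\alpha\mu'$ and $\alpha\nu'$, so they sum to $\mu$ and $\nu$. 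The second step is to compute
\[
\int c\, d\hat\gamma \;=\; \int c\, d\gamma - \int c\, d\tilde\gamma + \alpha \int c\, d\gamma'' \;<\; \int c\, d\gamma,
\]
where the strict inequality comes from $\alpha\int c\, d\gamma'' < \alpha \int c\, d\gamma' = \int c \, d\tilde\gamma$, contradicting optimality of $\gamma$.

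The main technical obstacle is justifying that the above additive manipulation of integrals is legitimate, since a priori some pieces could be infinite and we would face an $\infty - \infty$ situation. This is exactly where the hypothesis $c(x,y) \geq a(x) + b(y)$ with $a \in L^1(\mu)$, $b \in L^1(\nu)$ enters: writing $c = (c - a - b) + a + b$, the function $c - a - b$ is non-negative and lower semi-continuous, and $\int (c - a - b)\, d\gamma = C(\mu,\nu) - \int a\, d\mu - \int b\, d\nu < \infty$. Since $\tilde\gamma \leq \gamma$, monotonicity yields $\int (c - a - b)\, d\tilde\gamma < \infty$ and analogously for $\gamma - \tilde\gamma$; the $a$ and $b$ pieces split cleanly since they depend only on a single marginal, and these are integrable by assumption. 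This makes each integral finite and the rearrangement rigorous.

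Finally, for the uniqueness statement, assume $\gamma$ is the unique optimal coupling of $\mu$ and $\nu$, and suppose some $\gamma'' \neq \gamma'$ is also optimal in $\Pi(\mu',\nu')$ (so $C(\gamma'') = C(\gamma')$). Reusing the construction, $\hat\gamma := (\gamma - \tilde\gamma) + \alpha\gamma''$ lies in $\Pi(\mu,\nu)$ and satisfies $\int c\, d\hat\gamma = \int c\, d\gamma$, so $\hat\gamma$ is optimal. But $\gamma'' \neq \gamma'$ forces $\hat\gamma \neq \gamma$, contradicting uniqueness. Thus $\gamma'$ is the unique optimal coupling of its marginals.
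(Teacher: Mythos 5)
Your proof is correct and follows essentially the same route as the paper's cited source: the paper gives no proof of its own but points to Villani's argument, which is exactly your exchange construction (replace $\tilde\gamma$ by a cheaper coupling of the same marginals and contradict optimality of $\gamma$), including the use of $c\geq a+b$ to rule out $\infty-\infty$ cancellations. The only step you state slightly loosely is the integrability of $a$ and $b$ against the new marginals, but this follows at once from $\alpha\mu'\leq\mu$ and $\alpha\nu'\leq\nu$ by domination.
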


Theorem \ref{thm:restriction} can be understood as saying that if there is an inefficiency in a small part of the optimal coupling (represented by $\tilde\gamma$), then the larger plan $\gamma$ cannot be optimal and efficient.
The proof, as given in \cite{villani2008optimal}, follows this line of reasoning.
Theorem \ref{thm:restriction} is used to prove the Theorem~\ref{thm:kantdual}, the Kantorovich Duality Theorem. The Kantorovich Duality Theorem establishes the structure of optimality, which is that the support must be \textbf{$c$-cyclically monotone}. 
Additionally, it is a model for Corollary~\ref{corr:rnrestriction}, which depends on Theorem \ref{thm:restriction} via Theorem~\ref{thm:kantdual} (Kantorovich Duality Theorem). 

\begin{definition}[{\cite[Pg. 28]{santambrogio2015optimal}}]
\label{def:cCM}
Let $X$ and $Y$ be arbitrary sets and $c:X\times Y\mapsto (-\infty,\infty)$ be a function. A subset $\Gamma\subset X\times Y$ is said to be \textbf{$c$-cyclically monotone}, or \textbf{$c$-CM}, if for any $N\in\mathbb{N}$, permutation $\sigma$, and family of points $(x_1,y_1),\ldots,(x_n,y_n)$ in $\Gamma$, we have
\begin{equation}
    \label{eq:c-cyclic}
\sum_{i=1}^{N}c(x_i,y_i)\leq \sum_{i=1}^N c(x_i,y_{\sigma(i)}).
\end{equation}
\end{definition}
This notion is important because when the support of a coupling is $c$-cyclically monotone (henceforth referred to as $c$-CM), then it is optimal.
This relates to Theorem~\ref{thm:restriction} because if there were a collection of points in the support that were not $c$-CM, then we could form a non-optimal measure via restriction. 
The Kantorovich Duality Theorem, {\cite[Pg. 70]{villani2008optimal}}, is a standard result of the field, establishes the relationship between optimality and $c$-cyclic monotonicity as well as many other structural properties of optimal transport.
For our purposes, we focus on the equivalence between having $c$-CM support and optimality.

\begin{theorem}[Kantorovich Duality {\cite[Pg. 70]{villani2008optimal}}]
\label{thm:kantdual}
Let $(X,\mu)$ and $(Y,\nu)$ be two Polish probability spaces and let $c:X\times Y\mapsto \mathbb{R}~\cup \{\infty\}$ be a semi-continuous cost function, such that 
$$
\forall (x,y)\in X\times Y, ~~~c(x,y)\geq a(x)+b(y)
$$
for some real-valued upper semi-continuous functions $a\in L^1(\mu)$ and $b\in L^1(\nu)$.
If $c$ is real-valued and the optimal cost $C(\mu,\nu)=\inf_{\gamma\in \Pi(\mu,\nu)}\int c ~d\gamma$ is finite, then there is a measurable $c$-cyclically monotone set $\Gamma\subset X\times Y$ (closed if $a,~b,~c$ are continuous) such that for any $\gamma\in \Pi(\mu,\nu)$ the following statements are equivalent:
\begin{enumerate}
    \item  $\gamma$ is optimal;
    \item $\gamma$ is $c$-cyclically monotone;
    \item $\gamma$ is concentrated on $\Gamma$.
\end{enumerate}
\end{theorem}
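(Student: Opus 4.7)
The plan is to establish the cyclic equivalence (1)$\Leftrightarrow$(2) directly from Theorem~\ref{thm:restriction}, and then to construct a single measurable $c$-cyclically monotone set $\Gamma$ for which the three-way equivalence holds by way of a Kantorovich potential (via Rüschendorf's construction). Throughout I would use that an optimal coupling exists: $\Pi(\mu,\nu)$ is tight, hence weakly pre-compact by Prokhorov, and the lower bound $c(x,y)\ge a(x)+b(y)$ combined with lower semi-continuity of $c$ makes $\gamma\mapsto\int c\,d\gamma$ lower semi-continuous, producing a minimizer $\gamma_\ast$.

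For (1)$\Rightarrow$(2), I would argue by contradiction exactly as the narrative preceding the statement hints. If $\supp(\gamma)$ fails $c$-CM at points $(x_i,y_i)$ under some permutation $\sigma$, continuity of $c$ yields product neighborhoods $U_i\times V_i$ of $(x_i,y_i)$ on which the strict inequality $\sum c(u_i,v_{\sigma(i)})<\sum c(u_i,v_i)$ persists, and each satisfies $\gamma(U_i\times V_i)>0$ since $(x_i,y_i)\in\supp(\gamma)$. I would then remove a small common mass $\varepsilon>0$ from each restriction $\gamma|_{U_i\times V_i}$, disintegrate it along its $X$-marginal, and reinsert it paired with the $Y$-marginal from the $\sigma(i)^{\text{th}}$ block. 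The resulting competitor $\gamma'\in\Pi(\mu,\nu)$ has strictly smaller cost, contradicting optimality of $\gamma$ (equivalently, contradicting optimality of the normalized restrictions guaranteed by Theorem~\ref{thm:restriction}).

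Next, apply Rüschendorf's construction to the (now $c$-CM) set $\supp(\gamma_\ast)$, producing a $c$-concave $\phi_\ast:X\to\mathbb{R}\cup\{-\infty\}$ whose $c$-superdifferential
\[\Gamma := \{(x,y)\in X\times Y : \phi_\ast(x)+\phi_\ast^c(y)=c(x,y)\}\]
is closed (since $c$, $\phi_\ast$, and the $c$-transform $\phi_\ast^c$ are semi-continuous), $c$-CM by construction, and contains $\supp(\gamma_\ast)$. The pointwise bound $\phi_\ast(x)+\phi_\ast^c(y)\le c(x,y)$ together with $c\ge a+b$ yields $\phi_\ast\in L^1(\mu)$ and $\phi_\ast^c\in L^1(\nu)$, so for every $\gamma\in\Pi(\mu,\nu)$ one has the weak-duality inequality $\int c\,d\gamma\ge\int\phi_\ast\,d\mu+\int\phi_\ast^c\,d\nu$. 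Now (3)$\Rightarrow$(1) is immediate: if $\gamma$ is concentrated on $\Gamma$, the weak inequality becomes equality at the value attained by $\gamma_\ast$, so $\gamma$ is optimal. Conversely (1)$\Rightarrow$(3): any optimal $\gamma$ attains the same dual value, so the non-negative integrand $c(x,y)-\phi_\ast(x)-\phi_\ast^c(y)$ must vanish $\gamma$-a.e., placing $\gamma$ on $\Gamma$. Finally (2)$\Rightarrow$(1) follows by repeating Rüschendorf's construction on $\supp(\gamma)$ itself to obtain an ad hoc potential under which $\gamma$ attains the dual bound.

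The main obstacle is the construction and integrability of $\phi_\ast$. One must verify that the chain-sum infimum defining it is not identically $-\infty$, that the lower bound $c\ge a+b$ transfers to $\phi_\ast\ge a+\text{const}$ and $\phi_\ast^c\ge b+\text{const}$ so as to secure $L^1$ integrability against $\mu$ and $\nu$, and that the semi-continuity hypotheses propagate to make $\Gamma$ closed (and otherwise at least Borel). The rerouting step in (1)$\Rightarrow$(2) is the other delicate point, because the competitor $\gamma'$ must land exactly in $\Pi(\mu,\nu)$; I would handle this by taking, on each block, the product of the excised $X$-marginal with the permuted $Y$-marginal, so that summing over the $N$ blocks restores $\mu$ and $\nu$ exactly.
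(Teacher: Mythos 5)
The paper offers no proof of this theorem: it is quoted from Villani and the reader is referred to that source, so your attempt has to be measured against the standard proof. Your overall architecture matches it: existence of a minimizer via tightness of $\Pi(\mu,\nu)$ and lower semicontinuity of the cost functional, the rerouting competitor for (1)$\Rightarrow$(2) (your block-by-block product of the excised $X$-marginal with the $\sigma(i)^{\text{th}}$ $Y$-marginal is exactly the right way to stay inside $\Pi(\mu,\nu)$, and this is where continuity of $c$ is genuinely needed), and R\"uschendorf's chain construction of a $c$-concave potential whose contact set is $\Gamma$.

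The genuine gap is the integrability claim that your (3)$\Rightarrow$(1) and (2)$\Rightarrow$(1) steps rest on. The lower bound $c(x,y)\ge a(x)+b(y)$ does \emph{not} transfer to $\phi_\ast\ge a+\mathrm{const}$: in the chain infimum defining $\phi_\ast(x)$, the only term containing $x$ is $c(x,y_m)\ge a(x)+b(y_m)$, so one gets $\phi_\ast(x)\ge a(x)+\inf\bigl[\,b(y_m)-c(x_m,y_m)+c(x_m,y_{m-1})-\cdots\,\bigr]$, and the bracketed infimum contains $-c(x_m,y_m)$ with no upper bound on $c$ available under the stated hypotheses; it can be $-\infty$. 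Indeed, with only the lower bound $c\ge a\oplus b$ the optimal potentials need not be integrable at all --- Villani obtains integrable potentials only under the additional assumption $c(x,y)\le c_X(x)+c_Y(y)$ with $c_X\in L^1(\mu)$, $c_Y\in L^1(\nu)$ --- so the step "the weak-duality inequality becomes an equality at the value attained by $\gamma_\ast$" is not available as written. The standard repair is a truncation argument: work with $\max(\phi_\ast,-k)$ and its $c$-transform, prove the comparison for each $k$, and pass to the limit using the finiteness of $\int c\,d\gamma_\ast$. (In the regime the paper actually uses this theorem --- nonnegative continuous $c$ with $c\le c_X+c_Y$ and marginals in $\Pro_c$ --- your integrability claim does hold and your argument closes; but the theorem as stated assumes only the lower bound, and there your proof of (3)$\Rightarrow$(1) does not go through.)
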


For the full statement of Theorem~\ref{thm:kantdual} and proof consult \cite{villani2008optimal} or another standard source.
One important remark from \cite{villani2008optimal} is that $a,~b,~c$ being continuous is sufficient to ensure that the support of $\gamma$ to be $c$-CM. The continuity of $a, b,$ and $c$ will be assumed for the remainder of the paper.


\section{Absolutely Continuous Measures Inherit Optimality}\label{sec3}

In this section we extend the capabilities of optimal couplings by showing that optimality is retained when considering absolutely continuous measures with respect to the optimal couplings or one of the marginals.
This allows us to send a Radon-Nikodym derivative `through' an optimal coupling akin to sending a vector through a matrix, or a measure through a transition kernel. 
Thus optimal transport couplings can behave similarly to Markov kernels and this will help us establish optimal transport kernels in Section~\ref{sec4}.

Theorem~\ref{thm:restriction} establishes that a measure $\gamma'$ inherits optimality when it is generated by renormalizing a measure $\tilde\gamma$ that is dominated by an optimal transport plan $\gamma$. 
This paper revisits and strengthens that relationship by showing that any absolutely continuous measure with respect to an optimal coupling will also be optimal, so long as it retains finite cost. 

To simplify our presentation we will only consider continuous and non-negative cost functions.
Further, we will assume that $c(x,y)$ satisfies the pointwise upper bound $c(x,y)\leq c_X(x)+c_Y(y)$ for fixed functions $c_X(x)$, and $c_Y(y)$. 
We then restrict our attention to measures $\mu \in \Pro(X)$ and $\nu\in\Pro(Y)$ such that $c_X(x)\in L(\mu)$ and $c_Y(y)\in L(\nu)$. 
Thus the $c_X$-integral of $\mu$ and $c_Y$-integral of $\nu$ are finite. 
We denote the set of such measures $\Pro_c(X)$ and $\Pro_c(Y)$. 
Limiting to $\Pro_c(X)$ and $\Pro_c(Y)$ is a common restriction. 

\begin{remark}
In an analogy to Wasserstein distances, we note that restricting to $\Pro_c(X)$ is equivalent to looking at the measures with finite \pth-moments for the \pth~Wasserstein distance where the cost function is given by $c(x,y) = \dist(x,y)^p$. 
\end{remark}

It is necessary to consider only continuous cost functions to ensure that the support of our optimal coupling is $c$-CM. 
We assume non-negativity to circumvent the need to prove the integrability of $a(x)$ and $b(y)$ with respect to the new marginals.
When the cost is non-negative we can set $a$ and $b$ to be 0, ensuring the integrability of $a$ and $b$.
Unlike the continuity of the cost, this is optional and the theory can be furthered by applying more specific assumptions. 

One such assumption could be restricting not only to measures which have finite $c_X$ (or $c_Y$ integrals) but also finite $a(x)$ (or $b(y)$) integrals.
If we were in a case where we have $-c_X(x)-c_Y(y)\leq c(x,y)\leq c_X(x)+c_Y(y)$, then we would immediately have integrability with respect to $a(x) = -c_X(x)$ and $b(y) = c_Y(y)$.

We prove the following corollary of Theorem~\ref{thm:kantdual} (Kantorovich Duality) to extend Theorem~\ref{thm:restriction} from saying that optimality is inherited by restriction to establishing that optimality is inherited by absolutely continuous measures, under appropriate hypotheses. 

\begin{corollary}
\label{corr:rnrestriction}
Let $(X,\mu)$ and $(Y,\nu)$ be two Polish probability spaces and let $c(x,y)$ be a non-negative continuous cost function. 
Let $\gamma$ be an optimal coupling between $\mu$ and $\nu$ with finite cost.
Let $\omega\in \Pro(X\times Y)$ be another probability measure that is absolutely continuous with respect to $\gamma$. 
Then if $\omega$ has finite cost, it is an optimal transport plan between its marginals, $\eta:=(\proj_X)_\# \omega \in\Pro(X)$ and $\zeta :=(\proj_Y)_\#\omega \in\Pro(Y)$ . 
\end{corollary}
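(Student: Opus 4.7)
The plan is to apply Theorem~\ref{thm:kantdual} (Kantorovich Duality) twice: first to the pair $(\mu,\nu)$ to extract a $c$-cyclically monotone set $\Gamma$ on which $\gamma$ is concentrated, and then to the pair $(\eta,\zeta)$ to convert the concentration of $\omega$ on $\Gamma$ into optimality of $\omega$. This strategy is viable because $c$-cyclic monotonicity, as given in Definition~\ref{def:cCM}, is a property of a subset of $X\times Y$ alone and makes no reference to any marginals, so the same $\Gamma$ can serve both applications.

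For the first application I verify the hypotheses of Theorem~\ref{thm:kantdual}: the cost $c$ is continuous and non-negative, so the pointwise lower bound holds trivially with the choice $a\equiv b\equiv 0$, which lies in $L^1$ of any probability measure, and $\gamma$ having finite cost gives $C(\mu,\nu)<\infty$. The theorem then furnishes a measurable $c$-CM set $\Gamma\subset X\times Y$ on which $\gamma$ is concentrated. Because $\omega\ll\gamma$, every $\gamma$-null set is $\omega$-null; in particular $\omega\bigl((X\times Y)\setminus\Gamma\bigr)=0$, so $\omega$ is concentrated on $\Gamma$ as well.

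For the second application, to the pair $(\eta,\zeta)$, the same trivial choice $a\equiv b\equiv 0$ handles the lower bound, continuity of $c$ transfers unchanged, and the finite-cost hypothesis on $\omega$ yields $C(\eta,\zeta)\le\int c\,d\omega<\infty$ since $\omega\in\Pi(\eta,\zeta)$. Theorem~\ref{thm:kantdual} therefore gives, for any element of $\Pi(\eta,\zeta)$, the equivalence between optimality and concentration on a $c$-CM set. Since $\omega$ is concentrated on $\Gamma$ and $\Gamma$ is $c$-CM, $\omega$ is optimal between its marginals $\eta$ and $\zeta$.

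The main subtlety, such as it is, lies in the transfer of $\Gamma$ between the two applications of the duality theorem: one must confirm that the $c$-cyclically monotone set obtained from the optimality of $\gamma$ is a legitimate witness when re-using the theorem with the new marginals $\eta$ and $\zeta$. This is immediate from the marginal-independent form of Definition~\ref{def:cCM}, but it is the conceptual step around which the entire argument pivots; the remainder is elementary measure-theoretic bookkeeping and verification of the standing integrability assumptions.
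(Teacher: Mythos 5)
Your proof is correct and follows essentially the same route as the paper's: both verify the finite-cost hypothesis for the pair $(\eta,\zeta)$, extract a $c$-cyclically monotone set $\Gamma$ carrying the optimal plan $\gamma$, observe that $\omega\ll\gamma$ forces $\omega$ to be concentrated on $\Gamma$, and conclude optimality from the equivalence in Theorem~\ref{thm:kantdual}. Your phrasing in terms of concentration on $\Gamma$ rather than support containment is, if anything, slightly more careful than the paper's.
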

\begin{proof}
Consider such an $\omega$.
By assumption it has finite cost, and hence it is an immediate upper bound for the optimal transport cost $C(\eta,\zeta)$, as defined in Equation \eqref{eq:kantorovich}.
Thus we satisfy the hypothesis for Theorem~\ref{thm:kantdual}. 

Since $\gamma$ is an optimal transport plan, it is supported on a $c$-CM set $\Gamma$.
Since $\omega$ is absolutely continuous with respect to $\gamma$, $\supp(\omega)\subset \Gamma$.
Thus, by Theorem~\ref{thm:kantdual}(b) it is an optimal transport plan between its marginals.  
\end{proof}

This is a consequence of the Kantorovich Duality Theorem (Theorem \ref{thm:kantdual}), but is an extension of Theorem \ref{thm:restriction}.

A useful application of Corollary \ref{corr:rnrestriction} is when we have an optimal transport plan and are looking at a measure that is absolutely continuous with respect to one of the marginals. 
In order to ensure that we retain finite transport cost, we need to add an additional constraint either on the Radon-Nikodym derivative of the measure, or on the behavior of the cost function on the support of the optimal transport plan. 
Corollary~\ref{corr:rnkernelA} proves the case when we place an additional constraint on the Radon-Nikodym derivative, and Corollary~\ref{corr:rnkernelB} proves the case with the additional constraint of the cost function on the support of the original optimal transport plan. 

\begin{corollary}
\label{corr:rnkernelA}
On a Polish space $X$, let $c(x,y)$ be a non-negative cost function satisfying the conditions of Theorem \ref{thm:kantdual}, with $c(x,y)\leq c_X(x)+c_Y(y)$.
Then let $\mu\in\Pro_c(X)$ and $\nu\in\Pro_c(Y)$, and $\gamma$ be an optimal coupling between them. 
If $\eta\in\Pro_c(X)$ and $\eta\ll\mu$ with $f(x):=\frac{d\eta}{d\mu}(x)$ and $f(x)< M$ for some $M$, then $\omega(x,y):=f(x)\gamma(x,y)$ is an optimal coupling with marginals $\eta(x)$, and $\zeta(y):=(\proj_y)_\#\omega(x,y)$. 
\end{corollary}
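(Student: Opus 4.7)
The plan is to verify the hypotheses of Corollary~\ref{corr:rnrestriction} for the measure $\omega$ and then invoke it directly. Specifically, I need to check four things: that $\omega$ is a probability measure on $X \times Y$, that it is absolutely continuous with respect to $\gamma$, that its $X$-marginal equals $\eta$, and that it has finite transport cost under $c$.

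The first three are essentially bookkeeping. Nonnegativity of $\omega$ follows from $f \geq 0$ (as a Radon--Nikodym derivative of one probability measure with respect to another). The total mass is
\begin{equation*}
\omega(X \times Y) = \int_{X \times Y} f(x) \, d\gamma(x,y) = \int_X f(x) \, d\mu(x) = \eta(X) = 1,
\end{equation*}
using $(\proj_X)_\# \gamma = \mu$. Absolute continuity with respect to $\gamma$ is immediate from the definition $d\omega = f(x) \, d\gamma$. For the $X$-marginal, for any Borel $A \subset X$,
\begin{equation*}
(\proj_X)_\# \omega (A) = \int_{A \times Y} f(x) \, d\gamma(x,y) = \int_A f(x) \, d\mu(x) = \eta(A),
\end{equation*}
so $(\proj_X)_\# \omega = \eta$ as required, and $\zeta$ is defined as the $Y$-marginal.

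The only step with any content is the finite-cost condition. Here I use the boundedness hypothesis $f(x) < M$ together with the fact that $\gamma$ itself has finite $c$-cost. The latter follows from the assumptions $\mu \in \Pro_c(X)$, $\nu \in \Pro_c(Y)$, and $c(x,y) \leq c_X(x) + c_Y(y)$, giving
\begin{equation*}
\int c \, d\gamma \leq \int c_X \, d\mu + \int c_Y \, d\nu < \infty.
\end{equation*}
Then
\begin{equation*}
\int c(x,y) \, d\omega(x,y) = \int c(x,y) f(x) \, d\gamma(x,y) \leq M \int c \, d\gamma < \infty.
\end{equation*}

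With all hypotheses of Corollary~\ref{corr:rnrestriction} verified, we conclude that $\omega$ is an optimal coupling between its marginals $\eta$ and $\zeta$. I do not expect any real obstacle here; the proof is essentially a direct specialization of the previous corollary, where the role of the boundedness assumption $f < M$ is precisely to convert finiteness of the $\gamma$-cost into finiteness of the $\omega$-cost, which is the only nontrivial hypothesis of Corollary~\ref{corr:rnrestriction}.
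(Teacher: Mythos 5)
Your proof is correct and follows essentially the same route as the paper's: verify that $\omega \ll \gamma$ with finite cost and invoke Corollary~\ref{corr:rnrestriction}. The only (immaterial) difference is in the finite-cost estimate: you bound $\int c\, d\omega \leq M \int c\, d\gamma$ using the uniform bound $f < M$ on the whole integrand, whereas the paper splits $c \leq c_X + c_Y$ and applies $M$ only to the $c_Y$ term, keeping $\int c_X f\, d\gamma = \int c_X\, d\eta$; both yield finiteness under the stated hypotheses.
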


\begin{proof}
This is proven by first showing that $\omega$ has finite transport cost, and then showing that $\omega$ is optimal because it is absolutely continuous with respect to $\gamma$ by applying Corollary~\ref{corr:rnrestriction}. 

Observe that $\omega$ has finite transport cost since 
\begin{eqnarray*}
\int_{X\times Y}c(x,y)d\omega(x,y)&\leq& \int_{X\times Y} (c_X(x)+c_Y(y))f(x)d\gamma(x,y)\\
&\leq& \int_{X\times Y} c_X(x)f(x)d\gamma(x,y)+\int_{X\times Y} c_Y(y)M d\gamma(x,y)\\
&=& \int_X c_X(x)f(x)d\mu(x)+M\int_Y c_Y(y)d\nu(y)\\
&=& \int_X c_X(x)d\eta(x)+M\int_Y c_Y(y)d\nu(y)<\infty.
\end{eqnarray*}

Clearly $\omega\ll\gamma$, with Radon-Nikodym derivative $\frac{d\omega}{d\gamma}(x,y)=f(x)$. Thus $\omega$ satisfies the conditions of Corollary \ref{corr:rnrestriction} and is an optimal transport plan between the marginals $\eta$ and $\zeta$.
\end{proof}

\begin{corollary}
\label{corr:rnkernelB}
On a Polish space $X$, let $c(x,y)$ be a cost function satisfying the conditions of Theorem \ref{thm:kantdual}, with $c(x,y)\leq c_X(x)+c_Y(y)$.
Then let $\mu\in\Pro_c(X)$ and $\nu\in\Pro_c(Y)$, and $\gamma$ be an optimal coupling between them.
Let $\Gamma$ be a $c$-CM set on which $\gamma$ is concentrated. 
If $\eta\in\Pro_c(X)$ and $\eta\ll\mu$ with $f(x):=\frac{d\eta}{d\mu}(x)$ and $c(x,y)\leq A c_X(x)+B$ on $\Gamma$ for some $A,B < \infty$ then $\omega(x,y):=f(x)\gamma(x,y)$ is an optimal coupling with marginals $\eta(x)$, and $\zeta(y):=(\proj_y)_\#\omega(x,y)$.
Additionally, $\zeta(y)\in\Pro(Y)$ has finite $c_Y$-integral.
\end{corollary}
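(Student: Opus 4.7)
The plan is to parallel the proof of Corollary \ref{corr:rnkernelA} closely, substituting the restricted pointwise bound on $\Gamma$ for the boundedness assumption on $f$. There are three things to establish: (i) that $\omega$ has finite transport cost; (ii) that $\omega$ is an optimal coupling between its marginals, via appeal to Corollary \ref{corr:rnrestriction}; and (iii) that the new second marginal $\zeta$ lies in $\Pro_c(Y)$.

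For (i), since $\omega \ll \gamma$ and $\gamma$ is concentrated on $\Gamma$, so is $\omega$. The hypothesis $c(x,y)\leq A c_X(x)+B$ on $\Gamma$ lets us compute
\[
\int_{X\times Y} c(x,y)\,d\omega(x,y) = \int_{\Gamma} c(x,y) f(x)\,d\gamma(x,y) \leq A\int_X c_X(x)\,d\eta(x) + B,
\]
where the last equality uses $f\,d\mu = d\eta$ and $\int f\,d\mu = 1$. The right-hand side is finite because $\eta \in \Pro_c(X)$. For (ii), the Radon-Nikodym derivative $d\omega/d\gamma = f(x)$ is immediate, so with finite cost in hand, Corollary \ref{corr:rnrestriction} yields that $\omega$ is optimal between $\eta$ and $\zeta$.

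Step (iii) is the step where I expect the real work to be. Writing $\int c_Y\,d\zeta = \int c_Y(y) f(x)\,d\gamma(x,y)$, the integrand is again concentrated on $\Gamma$, so the task is to bound $c_Y(y)$ on $\Gamma$ in terms of $c_X(x)$. The hypothesis bounds $c$, not $c_Y$, directly, so I would combine the restricted bound $c\leq A c_X+B$ on $\Gamma$ with the global bound $c\leq c_X+c_Y$ and any reverse-triangle-type structure available on the cost (for metric costs one has $c_Y(y)\leq K(c(x,y)+c_X(x))+L$) to extract a bound of the form $c_Y(y)\leq A' c_X(x)+B'$ on $\Gamma$. Once such a bound is in place, the same computation as in step (i) gives
\[
\int c_Y\,d\zeta \leq A'\int c_X\,d\eta + B' < \infty,
\]
placing $\zeta$ in $\Pro_c(Y)$.

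The main obstacle is precisely this last derivation: the hypothesis on $\Gamma$ controls the full cost $c$, but we need to transfer that control onto the $y$-marginal cost $c_Y$. Without the boundedness of $f$ that drove Corollary \ref{corr:rnkernelA}, the bound on $c_Y$ has to come from the geometry of $\Gamma$ and the way $c$, $c_X$, and $c_Y$ interact, rather than from cheap estimates on the density side.
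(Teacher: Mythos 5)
Your steps (i) and (ii) are correct and coincide with the paper's argument: the restriction of the integral to $\Gamma$, the bound $c\leq Ac_X+B$ there, the identity $f\,d\mu=d\eta$, and the appeal to Corollary~\ref{corr:rnrestriction} with $\frac{d\omega}{d\gamma}=f(x)$ are exactly what the paper does.

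Step (iii) is where your proposal stops short of a proof, and you are right that this is where the difficulty sits: the hypothesis controls $c$ on $\Gamma$, not $c_Y$, and nothing in the stated assumptions lets you convert one into the other. Your suggested repair --- invoking a reverse-triangle-type estimate $c_Y(y)\leq K(c(x,y)+c_X(x))+L$ --- is only available for particular cost structures (e.g.\ $c(x,y)=\dist(x,y)^p$ with $c_X,c_Y$ taken as powers of the distance to a base point) and is not part of the corollary's hypotheses, so as written the argument does not close. You should be aware, however, that the paper's own proof of this step simply writes $\int_{\Gamma}c_Y(y)\,d\omega<\int_{\Gamma}\bigl(Ac_X(x)+B\bigr)\,d\omega$, i.e.\ it uses the pointwise bound $c_Y(y)\leq Ac_X(x)+B$ on $\Gamma$ as though that were the hypothesis. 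That inequality does not follow from $c(x,y)\leq Ac_X(x)+B$ together with $c(x,y)\leq c_X(x)+c_Y(y)$ (the latter bounds $c$ from above by $c_Y$, not the reverse), so the gap you identified is genuinely present in the source as well; the cleanest fix is to strengthen the hypothesis to a bound on $c_Y(y)$ over $\Gamma$ (or to add an assumption such as $c_Y(y)\leq K c(x,y)+L c_X(x)+M$), after which your step~(i) computation applied verbatim to $c_Y$ yields $\int_Y c_Y\,d\zeta\leq A'\int_X c_X\,d\eta+B'<\infty$ and hence $\zeta\in\Pro_c(Y)$.
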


\begin{proof}
This is proven by first showing that $\omega$ has finite transport cost, and then showing that $\omega$ is optimal because it is absolutely continuous with respect to $\gamma$ by applying Corollary~\ref{corr:rnrestriction}. 

Observe that $\omega$ has finite transport cost since 
\begin{eqnarray*}
\int_{X\times Y}c(x,y)d\omega(x,y)&=& \int_{\Gamma} (c(x,y))f(x)d\gamma(x,y)\\
&\leq& \int_{\Gamma} (A c_X(x)+B)f(x)d\gamma(x,y)\\
&=& \int_X (A c_X(x)+B)f(x)d\mu(x)\\
&=& A\int_X c_X(x)d\eta(x)+B<\infty.
\end{eqnarray*}

Clearly $\omega\ll\gamma$, with Radon-Nikodym derivative $\frac{d\omega}{d\gamma}(x,y)=f(x)$. Thus $\omega$ satisfies the conditions of Corollary \ref{corr:rnrestriction} and is an optimal transport plan between the marginals $\eta$ and $\zeta$. 

Finally, we show that $\zeta(y)$ has finite $c_Y$-integral by noting that 
\begin{eqnarray*}
   \int_Y c_Y(y)d\zeta &=& \int_{X\times Y}c_Y(y)d\omega(x,y)\\
   &=&\int_{\Gamma}c_Y(y)d\omega(x,y)\\
   &<&\int_{\Gamma} Ac_X(x)+B d\omega(x,y)\\
   &=&\int_{X\times Y} Ac_X(x)+B d\omega(x,y)\\
   &=& \int_X Ac_X(x)+B d\eta(x)<\infty
\end{eqnarray*}
\end{proof}

Corollaries \ref{corr:rnkernelA} and \ref{corr:rnkernelB} are two complementary means of ensuring the new coupling $\omega$ has finite cost.
Note, that only Corollary \ref{corr:rnkernelB} ensures us that $\zeta(y) = (\proj_Y)_\#(f(x)\gamma(x,y))$ has finite $c_Y$-integral.

Based on the results of the section, we have shown that we can use existing optimal couplings to guarantee the optimality of absolutely continuous couplings (when finite). 
This led to two corollaries that allow us to send an absolutely continuous marginal `through' an optimal coupling. 
These corollaries show that we can treat optimal couplings like Markov Kernels, which we will show in the next section. 

\section{Markov Kernels for Optimal Transport}\label{sec4}
In this section we connect our previous work to the functionality of transition kernels. 
We provide an example when both measures are discretely supported to provide intuition before providing background on transition kernels and extending optimal transport theory to Markov kernels. 

Corollary~\ref{corr:rnrestriction} allows us to consider any $\omega\in\Pro(X\times Y)$ that is absolutely continuous with respect to an optimal coupling $\gamma$, but there is a reason to focus on the case presented in Corollaries \ref{corr:rnkernelA} and \ref{corr:rnkernelB}. 
Generally $\frac{d\omega}{d\gamma} = g(x,y)$ is a function depending on both variables, but in Corollaries \ref{corr:rnkernelA} and \ref{corr:rnkernelB}, we focus on the cases when $g(x,y)$ depends only on one variable. 
Doing so allows us to view the coupling as acting as a stochastic kernel for measures absolutely continuous with respect to the marginal as opposed to the coupling. 

We show this reframing when $\mu$ and $\nu$ are discrete before describing it more generally. 
\subsection{Discrete Optimal Transport and Stochastic Matrices}\label{s3ssec1}
Consider optimal transport between probability vectors $ {\mu}$, and $ {\nu}$, and a cost function $c(x,y)\leq c_X(x)+c_Y(y)$.
The optimal coupling between $ {\mu}=(\mu_i)$, and $ {\nu}=(\nu_j)$ is then a $c$-CM array $\gamma=(\gamma_{i,j})$ such that 
\begin{equation}
\label{eq:marginalization}
    \mu_i=\sum_j \gamma_{i,j}, \text{~and~}\nu_j=\sum_i \gamma_{i,j}.
\end{equation}
Equation \eqref{eq:marginalization} is  called the marginalization condition and is often written as $\mu=\gamma \mathds{1}$, and $\nu^T=\mathds{1}^T\gamma$, although $\gamma$ is not used as a matrix in any other context. 
Note that $\mu$ is the sum of the columns of $\gamma$, while $\nu$ is the sum of the rows. 

If we assume $ {\mu}$ and $ {\nu}$ have all non-zero entries (since the corresponding row or column of $\gamma$ would otherwise be all zeros), then we can form the array 
\begin{equation}
\label{eq:stochmatrix}
S_{i,j}=\frac{\gamma_{i,j}}{\nu_j}.
\end{equation}
Summing the entries of each column of $S$, we have
\begin{eqnarray}
\label{eq:stochcomputation}
\sum_i S_{i,j}&=&\sum_i \frac{\gamma_{i,j}}{\nu_j}\nonumber\\
&=&\frac{1}{\nu_j}\sum_i \gamma_{i,j}\nonumber\\
&=& \frac{1}{\nu_j}\nu_j=1. 
\end{eqnarray}
This shows that $S$ is a stochastic matrix, and by construction we have $$
S\nu = \sum_j \frac{\gamma_{i,j}}{\nu_j}\nu_j=\mu.
$$
We can form a stochastic matrix that sends $\mu$ to $\nu$ similarly.
Because $\mu$ and $\nu$ represent measures, it is more suitable to treat them as row vectors moving forward and to view it as $\tilde S = (\gamma_{i,j}/\mu_i)$, and $\mu \tilde S = \nu$. 
The vectors $\mu$ and $\nu$ were chosen to be non-zero to avoid dividing by 0, however by restricting to the support of the measures, we avoid any difficulty. 
In the row (column) corresponding to a zero value of $\mu$ ($\nu$), all entries will be zero since we are summing non-negative values to 0. 
This point will be revisited when we extend this concept from vector measures to the more general case. 

The construction of the stochastic matrix $\tilde S$ becomes useful when considering a measure $\eta$ that is absolutely continuous with respect to $\mu$. 
Every measure on the same space as $\mu$ will be absolutely continuous with respect to $\mu$ since it is non-zero everywhere. Additionally, the Radon-Nikodym derivative will be bounded by the inverse of the smallest entry in $\mu$.

Then defining $\zeta := \eta \tilde S$, we have the that the coupling
\begin{equation}
    \label{eq:discreteRNcoupling}
    (\omega_{i,j}) = (\frac{\gamma_{i,j}}{\mu_i}\eta_i),
\end{equation}
is optimal by applying Corollary~\ref{corr:rnkernelA} with $\gamma$ and $f(i) = \frac{\eta_i}{\mu_i}$, which is the RN-derivative of $\eta$ with respect to $\mu$. 
Here we begin to extend the capabilities of an optimal coupling to include some of advantages of a transport map.
This allows us to take advantage of prior efforts to find optimal couplings while allowing those couplings to be used like a transport map.  

We started with the case of discrete measures not only to provide the context of stochastic matrices as a means of understanding the work in the remainder of this section, but also to provide insight into how these ideas can be implemented. 
\subsection{Optimal Transport Using Markov Kernels}\label{s3ssec2}

Markov kernels are a well-studied class of transition kernels. 
In this section we use Markov kernels in a novel way as a tool for optimal transport. 
\begin{definition}\cite[Page 37]{ccinlar2011probability} Let $(X,\mathfrak{M})$ and $(Y, \mathfrak{N})$ be measurable spaces and $K$ a mapping from $X\times \mathfrak{N}$ into $[0,\infty]$. 
Then $K$ is called a \textbf{transition kernel} from $(X,\mathfrak{M})$ into $(Y,\mathfrak{N})$ if:
\begin{enumerate}
    \item The function $x\mapsto K(x,B)$ is $\mathfrak{M}$-measurable for every subset $B$ in $\mathfrak{N}$, and
    \item the mapping $B\mapsto K(x,B)$ is a measure on $(Y,\mathfrak{N})$ for every $x\in X$. \label{list:kernel2}
\end{enumerate}
If in item 2 the measure is a probability measure for all $x\in X$, then $K$ is called a \textbf{stochastic kernel} or a \textbf{Markov kernel}.
\end{definition}
The definition of a kernel tells us that if we fix a measurable set in the space $Y$, then we have a measurable function on $(X,\mathfrak{M})$ and if we fix a point in the space $X$, then we have a measure on $Y$. 
As of yet, this does not provide us the functionality of mapping measures to measures, however it is a simple and standard construction to see that functionality. 

\begin{theorem}{\cite[Pg. 38]{ccinlar2011probability}}
\label{thm:transkernel}
Let K be a transition kernel from $(X,\mathfrak{M})$ into $(Y,\mathfrak{N})$. Then 
$$
Kg(x)=\int_Y g(y) K(x,dy),~x\in X,
$$
defines a measurable function $Kg$ in $\mathfrak{M}$ for every measurable function $g$ in $\mathfrak{N}$;
$$
\mu K (B)=\int K(x,B)d\mu(x), ~B\in \mathfrak{N},
$$
defines a measure $\mu K$ on $(Y,\mathfrak{N})$ for each measure $\mu$ on $(X,\mathfrak{M})$; and 
$$
(\mu K) g = \mu(Kg)=\int_X d\mu(x)\int_Y g(y) K(x,dy)
$$
for every measure $\mu$ on $(X,\mathfrak{M})$ and $\mathfrak{N}$-measurable function $g(y)$ on $Y$.
\end{theorem}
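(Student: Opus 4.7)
The plan is to establish all three assertions by the standard machine of measure theory: verify each claim first for indicators of measurable sets using the two defining conditions of a transition kernel, then extend by linearity to non-negative simple functions, to non-negative measurable functions via monotone convergence, and finally to general measurable functions via the decomposition $g = g^+ - g^-$. This is a completely routine argument; the only reason to write it down is to confirm that the two conditions in the definition of a transition kernel are exactly what is needed to run the machine.

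For the first assertion, when $g = \mathds{1}_B$ with $B\in\mathfrak{N}$, one has $Kg(x) = K(x,B)$, which is $\mathfrak{M}$-measurable by condition (1) in the definition of a transition kernel. Linearity then handles non-negative simple $g = \sum_i a_i \mathds{1}_{B_i}$, since $Kg = \sum_i a_i K(\cdot,B_i)$ is a finite linear combination of $\mathfrak{M}$-measurable functions. For general non-negative measurable $g$, choose simple functions $g_n \uparrow g$; for each fixed $x$, the monotone convergence theorem applied to the measure $K(x,\cdot)$ (using condition (2)) yields $Kg_n(x) \uparrow Kg(x)$, so $Kg$ is measurable as a pointwise limit of measurable functions. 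Splitting into positive and negative parts then gives the full claim.

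For the second assertion, $\mu K$ is non-negative and $\mu K(\emptyset) = 0$ because $K(x,\emptyset)=0$ by condition (2). For countable additivity, let $\{B_n\}$ be pairwise disjoint in $\mathfrak{N}$; condition (2) gives $K(x,\bigcup_n B_n) = \sum_n K(x,B_n)$ for every $x$, and integrating against $\mu$ while applying the Tonelli/monotone convergence theorem to the partial sums lets us interchange the countable sum with the integral, yielding $\mu K(\bigcup_n B_n) = \sum_n \mu K(B_n)$. For the third assertion, the identity $(\mu K)g = \mu(Kg)$ holds for $g=\mathds{1}_B$ by definition of $\mu K(B)$; linearity promotes it to non-negative simple $g$, a double application of monotone convergence (once on each side) promotes it to non-negative measurable $g$, and the usual $g = g^+ - g^-$ decomposition handles the general case.

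The main obstacle, such as it is, is simply the correct ordering of these extensions: one must prove the measurability in assertion (1) before one can even state the interchange in assertion (3), and one must use non-negativity throughout so that monotone convergence is available, only at the very end invoking linearity to pass to signed integrands. There is no subtle analytic difficulty, only bookkeeping, which is why this result is cited from \cite{ccinlar2011probability} rather than proved in detail in the main text.
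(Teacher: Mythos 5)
Your proof is correct and is the standard indicator--simple--monotone--signed bootstrap; the paper does not prove this theorem at all but cites it from \cite{ccinlar2011probability}, whose own argument is exactly the one you give, so there is nothing to reconcile. The only point worth flagging is that the final passage to signed $g$ via $g = g^+ - g^-$ requires that at least one of $\mu(Kg^+)$ and $\mu(Kg^-)$ be finite so that the difference is defined, which is why the cited source states the result for non-negative $g$ and why the theorem as transcribed here is slightly more generous than what the bootstrap literally delivers.
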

Theorem \ref{thm:transkernel} allows us to treat kernels as objects that map measures. 

In addition to viewing the kernel which maps a measure $\mu$ on $X$ to a measure $\mu K$ on $Y$, we can also view it as mapping to a measure on the product space $X\times Y$, with $\sigma$-algebra $\mathfrak{M}\otimes\mathfrak{N}$.
This construction will be similar to vector-matrix multiplication if we were to halt the process before summing along the rows (or columns for row vectors). 

\begin{theorem}{\cite[Pg. 41]{ccinlar2011probability}}
\label{thm:productmeas}
Let $\mu$ be a measure on $(X,\mathfrak{M})$ and $K$
be a Markov kernel from $(X,\mathfrak{M})$ to $(Y,\mathfrak{N})$. 
Then define the measure $\gamma$ by how it acts on measurable functions $f(x,y)$ in $(\mathfrak{M}\otimes\mathfrak{N})_+$ in the following way:
\begin{equation*}
    \gamma f=\int_X d\mu(x)\int_Y f(x,y) K(x,dy).
\end{equation*}
This defines a measure on the product space $(X\times Y,\mathfrak{M}\otimes\mathfrak{N})$. 
If $K$ is a Markov kernel and $\mu$ is $\sigma$-finite, then $\gamma$ is $\sigma$-finite and is the unique measure on the product space satisfying
$$
\gamma(A\times B)= \int_A K(x,B)d\mu(x),~ A\in\mathfrak{M}, B\in \mathfrak{N}.
$$
\end{theorem}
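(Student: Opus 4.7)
The plan is to build $\gamma$ via the standard measure-theoretic "good function/good set" machinery: first check the defining formula makes sense, then verify the measure axioms, then extend from rectangles to general sets, and finally obtain uniqueness from a $\pi$-$\lambda$ argument. Concretely, I would first show that for a measurable rectangle $A\times B$, the function $x\mapsto K(x,B)\mathds{1}_A(x)$ is $\mathfrak{M}$-measurable and nonnegative (the measurability of $x\mapsto K(x,B)$ is built into the definition of a transition kernel), so $\int_A K(x,B)\,d\mu(x)$ is a well-defined element of $[0,\infty]$. This handles the rectangle formula directly.

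The main obstacle, and the step I would treat most carefully, is showing that for every $E\in \mathfrak{M}\otimes\mathfrak{N}$ the map $x\mapsto K(x,E_x)$ (where $E_x=\{y:(x,y)\in E\}$ is the $x$-section) is $\mathfrak{M}$-measurable, so that the candidate $\gamma(E):=\int_X K(x,E_x)\,d\mu(x)$ is defined. The natural approach is a monotone class argument: let $\mathcal{D}$ be the collection of $E$ for which this measurability holds. The class $\mathcal{D}$ contains all measurable rectangles (a $\pi$-system generating $\mathfrak{M}\otimes\mathfrak{N}$) and, using that $K(x,\cdot)$ is a measure and hence countably additive and that $\mu$ is $\sigma$-finite, one checks that $\mathcal{D}$ is closed under finite disjoint unions, countable increasing unions, and proper differences within sets of finite $\gamma$-measure. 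Dynkin's $\pi$-$\lambda$ theorem then yields $\mathcal{D}=\mathfrak{M}\otimes\mathfrak{N}$. The $\sigma$-finiteness hypothesis on $\mu$ enters here: pick $X=\bigcup X_n$ with $\mu(X_n)<\infty$, and note $\gamma(X_n\times Y)=\int_{X_n}K(x,Y)\,d\mu(x)=\mu(X_n)<\infty$ since $K$ is a Markov kernel, so $X\times Y$ is covered by sets of finite $\gamma$-measure and $\gamma$ itself is $\sigma$-finite.

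Next I would verify that $\gamma$ is a measure: $\gamma(\emptyset)=0$ is immediate, and countable additivity for pairwise disjoint $E_n$ follows from monotone convergence applied twice, first to move the countable sum inside $K(x,\cdot)$ (which is a measure), then to move it outside the $\mu$-integral. Once $\gamma$ is established as a measure, the formula $\gamma f=\int_X d\mu(x)\int_Y f(x,y)\,K(x,dy)$ for general nonneg $(\mathfrak{M}\otimes\mathfrak{N})$-measurable $f$ is obtained by the standard machine: it holds for indicators of rectangles by construction, extends to indicators of arbitrary measurable sets by the monotone class step above, extends to nonneg simple functions by linearity, and extends to arbitrary nonneg measurable $f$ by monotone convergence.

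Uniqueness is then a clean consequence of the $\sigma$-finiteness combined with the fact that measurable rectangles form a $\pi$-system generating $\mathfrak{M}\otimes\mathfrak{N}$: any two $\sigma$-finite measures on the product space that agree on this $\pi$-system must agree on the entire generated $\sigma$-algebra. The exhausting sequence $X_n\times Y$ from the $\sigma$-finiteness argument above provides the sets of finite measure needed to run the uniqueness theorem. I expect the measurability step via the monotone class theorem to be the only substantive part; everything else is bookkeeping with the standard machine.
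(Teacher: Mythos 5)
The paper does not prove this theorem; it is quoted directly from \cite[Pg.\ 41]{ccinlar2011probability} as background, so there is no in-paper argument to compare against. Your construction is the standard one and is correct: well-definedness on rectangles, measurability of $x\mapsto K(x,E_x)$ by a $\pi$-$\lambda$ argument, countable additivity via monotone convergence, the standard machine for nonnegative $f$, and uniqueness from agreement on the generating $\pi$-system of rectangles together with the exhausting sequence $X_n\times Y$. One small bookkeeping remark: in the Dynkin step, closure of your class $\mathcal{D}$ under complements (or proper differences) is justified by the pointwise identity $K(x,(E_x)^c)=1-K(x,E_x)$, i.e.\ by $K(x,\cdot)$ being a \emph{probability} measure, not by the $\sigma$-finiteness of $\mu$; the latter is needed only for the $\sigma$-finiteness of $\gamma$ and for the uniqueness claim, exactly as you use it elsewhere.
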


When we wish to discuss $\mu K$ as a measure on the joint space $(X\times Y)$, we will write $\mu\odot K$ to distinguish it from the image measure $\mu K$ on $(Y,\mathfrak{N})$. 
This is meant to be reminiscent of the element-wise multiplication between matrices.

With these properties in place, we are able to usefully talk about the concept of an optimal transport kernel. 
\begin{definition}
A Markov kernel $K$ is an \textbf{optimal transport kernel} for a cost function $c(x,y)$ if for measures $\mu$ and $\nu:=\mu K $ the product measure $\gamma:=\mu\odot K$ is an optimal coupling between $\mu$ and $\nu$ when the transport problem has finite cost.
\end{definition}

To ensure that a transport kernel gives rise to a coupling with a finite cost, it is useful to consider cost functions $c(x,y)\leq c_X(x)+c_Y(y)$, as well as to consider transition kernels that are not only Markov kernels, but also \emph{$c_X$-bounded Markov kernels}, as defined by
\begin{definition}
A Markov transition kernel is said to be a $c_X-$\textbf{bounded} kernel if 
$$
\int_Y c_Y(y)K(x,dy)< Ac_X(x)+B \quad\text{for all $x\in X$}.
$$
\end{definition}

When $\mu\in\Pro_c(X)$ and $K$ is a $c_X$-bounded kernel, then we can be assured that $\mu\odot K$ has finite transport cost and that $\mu K\in \Pro_c(Y)$ as shown in the following theorem.
\begin{theorem}
Let $\mu\in\Pro_c(X)$ and $K$ be a $c_X$-bounded kernel. 
Then the product measure $\gamma := \mu \odot K$ has finite transport cost and $\mu K \in \Pro_c(Y)$. 
If $K$ is additionally an optimal transport kernel, then $\gamma$ is an optimal coupling between $\mu$ and $\mu K$.
\end{theorem}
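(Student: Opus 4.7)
The plan is to verify the three claims in order: finite transport cost of $\gamma$, finite $c_Y$-integral of $\mu K$, and optimality of $\gamma$ under the additional hypothesis. All three follow from unpacking the definitions and applying Theorem~\ref{thm:productmeas} to rewrite integrals against $\gamma$ as iterated integrals.

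First I would show $\mu K \in \Pro_c(Y)$ because this is a clean warm-up for the iterated-integral computation and is used again in the cost estimate. By Theorem~\ref{thm:productmeas}, $\int_Y c_Y(y)\, d(\mu K)(y) = \int_X \left(\int_Y c_Y(y)\,K(x,dy)\right) d\mu(x)$. The $c_X$-boundedness of $K$ bounds the inner integral by $A c_X(x) + B$ pointwise in $x$, and $\mu \in \Pro_c(X)$ makes the resulting outer integral finite. So $\mu K \in \Pro_c(Y)$.

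Next I would bound the transport cost of $\gamma = \mu \odot K$. Using $c(x,y) \leq c_X(x) + c_Y(y)$ and Theorem~\ref{thm:productmeas} again,
\begin{equation*}
\int_{X\times Y} c(x,y)\, d\gamma(x,y) \leq \int_X c_X(x)\, d\mu(x) + \int_X \int_Y c_Y(y)\,K(x,dy)\, d\mu(x).
\end{equation*}
The first term is finite because $\mu \in \Pro_c(X)$. The second term is exactly $\int_Y c_Y\, d(\mu K)$, which is finite by the previous paragraph (or directly by the $c_X$-bounded estimate $Ac_X(x)+B$ integrated against $\mu$).

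Finally, under the hypothesis that $K$ is an optimal transport kernel, the conclusion that $\gamma$ is an optimal coupling between $\mu$ and $\mu K$ is immediate from the definition: we have just verified the qualifier ``when the transport problem has finite cost,'' so the definition of optimal transport kernel applies and $\gamma = \mu \odot K$ is optimal between its marginals $\mu$ and $\mu K$. I do not anticipate a real obstacle here; the only subtle point is carefully invoking Theorem~\ref{thm:productmeas} to interchange the iterated integrals, which requires $c_X$ and $c_Y$ to be non-negative measurable functions and $\mu$ to be $\sigma$-finite, both of which hold under the standing hypotheses.
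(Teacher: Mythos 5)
Your proposal is correct and follows essentially the same route as the paper: both split the cost via $c(x,y)\leq c_X(x)+c_Y(y)$, use the $c_X$-bounded estimate $\int_Y c_Y(y)K(x,dy)\leq Ac_X(x)+B$ together with $\mu\in\Pro_c(X)$, and then invoke the definition of optimal transport kernel once finiteness is established. The only difference is cosmetic — you prove $\mu K\in\Pro_c(Y)$ first and reuse it in the cost bound, whereas the paper does the two computations in the opposite order.
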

\begin{proof}
We show that $\mu\odot K$ has finite cost by definition of $c_X$-bounded. 

\begin{eqnarray}
\mu\odot K (c(x,y))&=&\int_Xd\mu(x)\int_Y c(x,y)K(x,dy)\\
                &\leq& \int_Xd\mu(x)  \int_Y c_X(x)+c_Y(y)K(x,dy)\\
                &=& \int_Xd\mu(x) c_X(x)+\int_Y c_Y(y)K(x,dy) \\
                &<& \int_X (A+1) c_X(x)+B d\mu(x) < \infty.
\end{eqnarray}
The final inequality arises because $\mu$ is in $\Pro_c(X)$. This shows that $\mu\odot K$ is a finite transport plan between $\mu$ and $\mu K$.

Similarly, 
\begin{eqnarray}
\mu K (c_Y(y))&=&\int_Xd\mu(x)\int_Y c_Y(y)K(x,dy)\\
            &=& \int_Xd\mu(x)\int_Y c_Y(y)K(x,dy) \\
                &<& \int_X Ac_X(x)+B d\mu(x) < \infty,
\end{eqnarray}
which shows that $\mu K \in \Pro_c(Y)$. 

When $K$ is an optimal transport kernel, the coupling $\mu \odot K$ is optimal so long as it has finite cost, which we just showed. 
\end{proof}

As alluded to earlier, we have been (nearly) constructing optimal transport kernels whenever we constructed an optimal transport plan.
By utilizing Corollary~\ref{corr:rnkernelB}, we can use a optimal coupling as an optimal transport kernel for any measure that is absolutely continuous with respect to a marginal. 
We begin by defining the set of measures for which an optimal coupling may act as an optimal transport kernel. 

\begin{definition}
Let $\ACP^b_c(\mu)$ be the set of measures in $\Pro_c(X)$ which are absolutely continuous with respect to $\mu$ with bounded Radon-Nikodym derivatives. 
\end{definition}

We now show how to use a coupling with marginal $\mu$ as a transition kernel for the set of measures $\ACP^b_c(\mu)$.
\begin{definition}
Let $\gamma$ be a coupling between measures $\mu$ and $\nu$. Then we define $K^\gamma$ as the transition kernel associated to $\gamma$ defined on the measures $\eta\in\ACP^b_c(\mu)$ by
$$
\eta \odot K^\gamma = \frac{d\eta}{d\mu}(x)\gamma(x,y)
$$
and
$$
\eta K^\gamma = (\proj_Y)_\# \left(\frac{d\eta}{d\mu}(x)\gamma(x,y)\right).
$$
\end{definition}

\begin{theorem}
If $\gamma$ is an optimal coupling between $\mu$ and $\nu$ in $\Pro_c(X)$, $\Pro_c(Y)$ respectively, then on $\ACP^b_c(\mu)$ $K^\gamma$ is an optimal transport kernel and $\eta K^\gamma$ is in $\Pro_c(Y)$ for any $\eta\in \ACP^b_c(\mu)$ . 
\end{theorem}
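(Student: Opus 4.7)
The plan is to derive this theorem as essentially a direct application of Corollary \ref{corr:rnkernelA}, together with a short additional computation showing that the $Y$-marginal lies in $\Pro_c(Y)$.

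First I would fix an arbitrary $\eta \in \ACP^b_c(\mu)$. By hypothesis, the Radon-Nikodym derivative $f(x) := \tfrac{d\eta}{d\mu}(x)$ exists and is bounded by some $M < \infty$, and $\eta \in \Pro_c(X)$. Since $\gamma$ is an optimal coupling between marginals in $\Pro_c(X)$ and $\Pro_c(Y)$, it has finite transport cost, so all the hypotheses of Corollary \ref{corr:rnkernelA} are satisfied with this choice of $f$. Applying the corollary yields that the measure $f(x)\gamma(x,y)$ — which is by definition $\eta \odot K^\gamma$ — is an optimal coupling between $\eta$ and the marginal $\eta K^\gamma := (\proj_Y)_\#(f(x)\gamma(x,y))$. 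Because this holds for every $\eta \in \ACP^b_c(\mu)$, the operator $K^\gamma$ satisfies the definition of an optimal transport kernel on $\ACP^b_c(\mu)$.

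Next I would verify that $\eta K^\gamma \in \Pro_c(Y)$ by bounding its $c_Y$-integral directly. Using $f(x) \leq M$ and that the $Y$-marginal of $\gamma$ is $\nu$:
\begin{equation*}
\int_Y c_Y(y)\, d(\eta K^\gamma)(y) = \int_{X \times Y} c_Y(y) f(x)\, d\gamma(x,y) \leq M \int_{X \times Y} c_Y(y)\, d\gamma(x,y) = M \int_Y c_Y(y)\, d\nu(y),
\end{equation*}
and the right-hand side is finite because $\nu \in \Pro_c(Y)$.

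There is no substantive obstacle here: Corollary \ref{corr:rnkernelA} does the heavy lifting, while the $c_Y$-integrability of $\eta K^\gamma$ follows immediately from the boundedness of the Radon-Nikodym derivative combined with $\nu \in \Pro_c(Y)$. The content of the theorem is really the packaging of Corollary \ref{corr:rnkernelA} in kernel language; uniformity across all $\eta \in \ACP^b_c(\mu)$ is automatic, since the corollary applies to each such $\eta$ individually and the operator $K^\gamma$ is defined by the same rule on the entire class.
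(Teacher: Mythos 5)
Your proposal is correct and follows essentially the same route as the paper: the paper's own proof establishes finite cost of $\eta\odot K^\gamma$ directly via the bound $M\int c\,d\gamma<\infty$ and then invokes optimality through absolute continuity (i.e.\ Corollary~\ref{corr:rnrestriction}), which is exactly the content of Corollary~\ref{corr:rnkernelA} that you cite as a black box. Your computation for the $c_Y$-integral of $\eta K^\gamma$ is identical to the paper's.
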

\begin{proof}
The proof follows from the definitions we established. 

Let $\eta\in\ACP^b_c(\mu)$.
Then 
\begin{eqnarray}
\eta\odot K^\gamma (c(x,y))&=&\int_X \int_Y c(x,y)\frac{d\eta}{d\mu}(x)\gamma(x,y)\\
                &\leq& M \int_X\int_Y c(x,y)\gamma(x,y)  < \infty.
\end{eqnarray}

This shows that $\eta \odot K^\gamma$ has finite cost and is thus the optimal coupling between $\eta$ and $\eta K^\gamma$.

To show that $\eta K^\gamma$ is in $\Pro_c(Y)$, we need only note that 
\begin{eqnarray}
    \int_Yc_Y(y) \eta K^\gamma(y) &=& \int_X\int_Y c_Y(y) \frac{d\eta}{d\mu}(x) \gamma(x,y)\\ &\leq& M\int_X\int_Y c_Y(y)\gamma(x,y) = M\int_Yc_Y(y)d\nu<\infty.\
\end{eqnarray}

This completes both parts of the proof. 
\end{proof}

For the remainder of the paper we will talk about the kernel associated to a coupling and omit discussion of the limited range of the kernel.
It is left to future work to extend the kernel to be defined point-wise like how regular transition kernels are defined, and to extend them beyond the support of $\gamma$ when that support is limited. 
Additionally, understanding if there are better ways to extend beyond the measures that are absolutely continuous with bounded RN-derivatives. 

\subsection{$c$-Cyclic Monotonic Compatibility}\label{subsec3}
Here, we define $c$-cyclic monotonic compatibility, which will be a tool to recognize when two couplings can be combined to form a single optimal transport kernel. 

Given an optimal transport kernel $K$, and two measures $\mu_1$ and $\mu_2$ for which $\mu_1\odot K$ and $\mu_2\odot K$ are optimal couplings, then we would expect there to be some compatibility between the optimal couplings they generate. 
This is indeed the case. 
Additionally, the same notion tells us when we can take two optimal couplings and view them as arising from the same optimal transport kernel. 
We call this notion \textbf{$c$-CM compatibility}.
In this section, we discuss the supports of various measures.
We remind the reader that we are working with Polish spaces, and so the topology is defined by the distance metric. 

\begin{definition}
Two optimal couplings $\gamma_1$ and $\gamma_2$ are \textbf{$c$-CM compatible} if $ \supp(\gamma_1)~\cup\supp{(\gamma_2)} $ is c-cyclically monotone. 
\end{definition}

The following two theorems establish that optimal couplings from the same kernel will be 
$c$-CM compatible and a partial result saying that we can view $c$-CM couplings as coming from the same kernel. 

\begin{theorem}
Let $K$ be an optimal transport kernel and let $\gamma_1 := \mu_1\odot K$ and $\gamma_2 : = \mu_2\odot K $ for some $\mu_1$ and $\mu_2$. Then $\gamma_1$ and $\gamma_2$ are $c$-CM compatible. 
\end{theorem}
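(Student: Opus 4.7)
The plan is to exhibit a single optimal coupling whose support is exactly $\supp(\gamma_1)\cup\supp(\gamma_2)$, and then invoke the Kantorovich Duality Theorem to conclude that this support is $c$-cyclically monotone.

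The natural candidate is built by averaging the source measures. Set $\mu_3 := \tfrac{1}{2}\mu_1 + \tfrac{1}{2}\mu_2$ and consider $\gamma_3 := \mu_3 \odot K$. The first step is to verify that, by the linearity of the product-measure construction in Theorem~\ref{thm:productmeas} with respect to the measure $\mu$, we have
\begin{equation*}
\gamma_3 \;=\; \tfrac{1}{2}\gamma_1 + \tfrac{1}{2}\gamma_2.
\end{equation*}
This is immediate from the defining identity $\gamma(A\times B) = \int_A K(x,B)\,d\mu(x)$ applied to $\mu_3$ on product rectangles (and then extended by the uniqueness statement in Theorem~\ref{thm:productmeas}).

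Next, I would check that the hypotheses of the optimal transport kernel definition apply to $\mu_3$: since $\gamma_1$ and $\gamma_2$ have finite cost (they are optimal couplings), the convex combination $\gamma_3$ also has finite cost. Because $K$ is an optimal transport kernel, $\gamma_3 = \mu_3 \odot K$ is therefore an optimal coupling between $\mu_3$ and $\mu_3 K$. By Theorem~\ref{thm:kantdual}, the support of $\gamma_3$ is $c$-cyclically monotone.

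The final step is the set-theoretic identity
\begin{equation*}
\supp(\gamma_3) \;=\; \supp\!\bigl(\tfrac{1}{2}\gamma_1 + \tfrac{1}{2}\gamma_2\bigr) \;=\; \supp(\gamma_1)\cup\supp(\gamma_2),
\end{equation*}
which holds because a point lies in the support of a sum of nonnegative Borel measures on a Polish space if and only if every open neighborhood of it has positive mass under at least one summand. Combining this with the $c$-cyclic monotonicity of $\supp(\gamma_3)$ yields the claim. The only subtle point I expect is the support identity for sums of measures (which is standard for finitely many Radon measures on a metrizable space), and confirming that the finite-cost requirement in the definition of ``optimal transport kernel'' is indeed inherited by the convex combination; both should be short verifications rather than genuine obstacles.
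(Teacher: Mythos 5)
Your proposal is correct and follows essentially the same route as the paper: average the sources to form $\mu_3=\tfrac12(\mu_1+\mu_2)$, use linearity of $\mu\odot K$ in $\mu$ so that $\gamma_3=\tfrac12\gamma_1+\tfrac12\gamma_2$ is an optimal coupling with $c$-CM support, and conclude since $\supp(\gamma_1)\cup\supp(\gamma_2)$ lies in $\supp(\gamma_3)$. The paper only needs the containment $\supp(\gamma_1)\cup\supp(\gamma_2)\subset\supp(\gamma_3)$ (a subset of a $c$-CM set is $c$-CM), so your extra work establishing equality of supports and explicitly checking finite cost is harmless but not required.
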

\begin{proof}
Consider $\mu_3 = \frac12(\mu_1+\mu_2)$ and let $\gamma_3 = \mu_3 \odot K$. The supports of $\gamma_1$ and $\gamma_2$ are contained with the support of $\gamma_3$, so 
\begin{equation*}
\supp(\gamma_1)~\cup\supp(\gamma_2) \subset \supp(\gamma_3).
\end{equation*}
The support of $\gamma_3$ is $c$-CM since $K$ is an optimal transport kernel (and we're assuming that $c$ is a continuous cost function). Thus $\supp(\gamma_1)~\cup\supp(\gamma_2)$ is $c$-CM.

\end{proof}

Showing that when two couplings are $c$-CM compatible then they can be formed from the same optimal transport kernel follows a similar construction. 

\begin{theorem}
\label{thm:cCMprod}
Let $\gamma_1$ and $\gamma_2$ be optimal couplings that are $c$-CM compatible with disjoint $X$-support, which is defined as $X-\supp(\gamma) = \supp((\proj_X)_\# \gamma_i)$. 
Then there exists an optimal transport kernel $K$ and measures $\mu_1$ and $\mu_2$ such that $\gamma_1 = \mu_1 \odot K$ and $\gamma_2 = \mu_2 \odot K$. 
\end{theorem}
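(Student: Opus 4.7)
The plan is to average the two optimal couplings into a single optimal coupling $\gamma_3$, whose associated kernel $K^{\gamma_3}$ will recover both $\gamma_1$ and $\gamma_2$. Set $\mu_i := (\proj_X)_\# \gamma_i$ and $\nu_i := (\proj_Y)_\# \gamma_i$ for $i=1,2$, and define $\mu_3 := \tfrac{1}{2}(\mu_1+\mu_2)$, $\nu_3 := \tfrac{1}{2}(\nu_1+\nu_2)$, and $\gamma_3 := \tfrac{1}{2}(\gamma_1+\gamma_2)$. By linearity of pushforwards, $\gamma_3$ is a coupling of $\mu_3$ with $\nu_3$, and its support satisfies $\supp(\gamma_3) \subset \supp(\gamma_1) \cup \supp(\gamma_2)$, which is $c$-CM by the compatibility hypothesis. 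Theorem~\ref{thm:kantdual} then delivers the optimality of $\gamma_3$ between $\mu_3$ and $\nu_3$.

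With $\gamma_3$ optimal, I would take $K := K^{\gamma_3}$, the kernel associated to $\gamma_3$ in the sense of the preceding subsection, with natural domain $\ACP^b_c(\mu_3)$. The disjoint $X$-support hypothesis gives $\supp(\mu_1)\cap\supp(\mu_2)=\emptyset$; since each $\supp(\mu_i)$ is closed, hence Borel, in the Polish space $X$, we have $\frac{d\mu_1}{d\mu_3} = 2\,\mathds{1}_{\supp(\mu_1)}$, which is bounded by $2$. So $\mu_1 \in \ACP^b_c(\mu_3)$, and analogously $\mu_2 \in \ACP^b_c(\mu_3)$, meaning $K$ is defined on both inputs.

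It remains to check $\mu_i \odot K = \gamma_i$. Unwinding the definition,
\begin{equation*}
\mu_1 \odot K(x,y) = \frac{d\mu_1}{d\mu_3}(x)\,\gamma_3(x,y) = 2\,\mathds{1}_{\supp(\mu_1)}(x)\,\gamma_3(x,y).
\end{equation*}
Because $\supp(\gamma_2)\subset \supp(\mu_2)\times Y$ is disjoint from $\supp(\mu_1)\times Y$, the restriction of $\gamma_3$ to $\supp(\mu_1)\times Y$ equals $\tfrac{1}{2}\gamma_1$; since $\gamma_1$ is itself concentrated on $\supp(\mu_1)\times Y$, the displayed expression equals $\gamma_1$. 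The identical argument yields $\mu_2 \odot K = \gamma_2$, completing the construction. The optimality of $K$ on all of $\ACP^b_c(\mu_3)$ is an immediate application of Corollary~\ref{corr:rnrestriction} to $\gamma_3$, since any $\eta \odot K$ with $\eta \in \ACP^b_c(\mu_3)$ is absolutely continuous with respect to $\gamma_3$.

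The step demanding the most care, rather than being a genuine obstacle, is the support bookkeeping in the central display: one must invoke closedness of topological supports in a Polish space to make $\mathds{1}_{\supp(\mu_1)}$ measurable, and one must use that $(\proj_X)_\#\gamma_1 = \mu_1$ forces $\gamma_1$ to vanish outside $\supp(\mu_1)\times Y$, so that multiplying by the indicator neither adds nor removes any mass. Once those are in place, the proof is essentially arithmetic on Radon-Nikodym derivatives.
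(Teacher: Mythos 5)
Your proposal is correct and follows essentially the same route as the paper's own proof: average the couplings into $\gamma_3=\tfrac12(\gamma_1+\gamma_2)$, use $c$-CM compatibility to conclude $\gamma_3$ is optimal, and recover each $\gamma_i$ from the kernel $K^{\gamma_3}$ via the Radon--Nikodym derivative $\tfrac{d\mu_i}{d\mu_3}=2$ on $\supp(\mu_i)$ and $0$ elsewhere. Your version is somewhat more careful than the paper's on the bookkeeping (measurability of the indicator, verifying $\mu_i\in\ACP^b_c(\mu_3)$, and why multiplying $\gamma_3$ by $\mathds{1}_{\supp(\mu_1)}(x)$ isolates exactly $\tfrac12\gamma_1$), but the underlying argument is the same.
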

\begin{proof}
Let $\gamma_3 = \frac12 (\gamma_1+\gamma_2)$. This is an optimal coupling due to the assumption of $c$-CM compatibility. 
Notice that $\gamma_1$ and $\gamma_2$ are absolutely continuous with respect to $\gamma_3$. Let $f_i(x,y)$ be the RN-derivative of $\gamma_i$ with respect to $\gamma_3$.
Since $\gamma_1$, and $\gamma_2$ have disjoint $X$-support, their supports are disjoint.
Thus $f_i = 2$ on the support of $\gamma_i$ and $f_i = 0$ otherwise.

Let $\mu_i = (\proj_X)_\#\gamma_i$ for $i=1,2,3$. 
Then because the $X$-supports are disjoint, the $m_i(x) := \frac{d\mu_i}{d\mu_3} = 2$ on the support of $\mu_i$.
Let $A_i$ be the support of $\mu_i$ and $B_i$ be the support of $\gamma_i$. 
Notice that $m_i(x)\restriction_{B_3}\equiv f_i(x,y)$. 
Thus $m_i(x)\gamma_3(x,y) = f_i(x,y)\gamma_3(x,y)$. 

Let $K^\gamma$ be the optimal transport kernel associated with $\gamma_3$. 
Consider $\mu_i\odot K^\gamma$.
Notice that 
\begin{eqnarray}
    \mu_i\odot K^\gamma &=& m_i(x)\gamma_3(x,y) \\
    &=&   f_i(x,y)\gamma_3(x,y) =\gamma_i(x,y) 
\end{eqnarray}
Thus $\mu_i \odot K^\gamma = \gamma_i$.
\end{proof}

It is necessary for the supports of the measures $\mu_1$ and $\mu_2$ to be disjoint in $X$ if we want to realize $\gamma_1$ and $\gamma_2$ them as product measures from the same kernel. 
To see this, consider the optimal transport plans $\gamma_1$ and $\gamma_2$ that send $\delta_x$ to $\delta_{y_1}$ and $\delta_x$ to $\delta_{y_2}$. 
These are $c$-CM compatible, but it is impossible for there to be an appropriate kernel.

However, there is of course still a larger class of $c$-CM optimal transport plans that we can realize as product measures from the same kernel. 
\begin{theorem}
Consider two $c$-CM compatible optimal couplings $\gamma_1$ and $\gamma_2$ such that 
$$
\gamma_1(x,y) = a\pi_1(x,y)+f(x)\pi_2(x,y)$$
and 
$$\gamma_2 = g(x)\pi_2(x,y)+b\pi_3(x,y),$$
for probability couplings $\pi_1$, $\pi_2$ and $\pi_3$ that have mutually disjoint $X$-support and functions $f(x),g(x)$ which are zero outside of the support of $\pi_2$. 
Then there exists an optimal transport kernel $K$ and measures $\mu_1$ and $\mu_2$ such that $\mu_1\odot K = \gamma_1$ and $\mu_2\odot K = \gamma_2$.
\end{theorem}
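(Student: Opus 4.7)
The plan is to follow the template of Theorem~\ref{thm:cCMprod}: form the averaged coupling $\gamma_3 := \tfrac{1}{2}(\gamma_1+\gamma_2)$, argue it is optimal by $c$-CM compatibility, take $K := K^{\gamma_3}$, and recover each $\gamma_i$ from its $X$-marginal $\mu_i := (\proj_X)_\#\gamma_i$ via the kernel. The new wrinkle compared to Theorem~\ref{thm:cCMprod} is that the $X$-supports of $\gamma_1$ and $\gamma_2$ now genuinely overlap on $\supp(\pi_2)$, so the Radon--Nikodym densities $\tfrac{d\mu_i}{d\mu_3}$ can no longer be the simple constants $\{0,2\}$ used there and must interpolate between the weights $f$ and $g$.

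The first step is routine: since $\supp(\gamma_3)\subseteq\supp(\gamma_1)\cup\supp(\gamma_2)$ is $c$-CM by hypothesis and $\gamma_3$ inherits finite cost from $\gamma_1,\gamma_2$, Theorem~\ref{thm:kantdual} makes $\gamma_3$ an optimal coupling between its marginals; thus $K^{\gamma_3}$ is an optimal transport kernel. Writing $\mu_{\pi_j} := (\proj_X)_\#\pi_j$, the disjointness of $\supp(\pi_1),\supp(\pi_2),\supp(\pi_3)$ in $X$ together with the vanishing of $f,g$ outside $\supp(\pi_2)$ gives the block decomposition
\begin{equation*}
\mu_1 = a\mu_{\pi_1}+f\mu_{\pi_2},\qquad \mu_2 = g\mu_{\pi_2}+b\mu_{\pi_3},\qquad \mu_3 = \tfrac{a}{2}\mu_{\pi_1}+\tfrac{f+g}{2}\mu_{\pi_2}+\tfrac{b}{2}\mu_{\pi_3}.
\end{equation*}

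From this I would read off $\tfrac{d\mu_1}{d\mu_3}$ piecewise: it equals $2$ on $\supp(\pi_1)$, $0$ on $\supp(\pi_3)$, and $\tfrac{2f(x)}{f(x)+g(x)}$ on $\supp(\pi_2)\cap\{f+g>0\}$; the $\mu_3$-null set $\supp(\pi_2)\cap\{f+g=0\}$ can be assigned any value. Because $f\leq f+g$ pointwise where the ratio is defined, this density is bounded by $2$, so $\mu_1\in\ACP^b_c(\mu_3)$, and symmetrically for $\mu_2$. Evaluating $\mu_1\odot K^{\gamma_3} = \tfrac{d\mu_1}{d\mu_3}(x)\,\gamma_3(x,y)$ block by block then yields $a\pi_1(x,y)$, $f(x)\pi_2(x,y)$, and $0$ on the three $X$-slabs respectively, reassembling to $\gamma_1$; the analogous calculation recovers $\gamma_2$.

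The main obstacle, and the reason for the precise form of the hypothesis, lives on the overlap slab $\supp(\pi_2)$: the key identity $\tfrac{2f(x)}{f(x)+g(x)}\cdot\tfrac{f(x)+g(x)}{2}\pi_2(x,y) = f(x)\pi_2(x,y)$ succeeds only because $\gamma_1$ and $\gamma_2$ restrict there to multiples of the \emph{same} underlying probability coupling $\pi_2$, i.e.\ share a common $Y$-conditional distribution at each $x$. Any weaker assumption would demand that one kernel map a single $x\in\supp(\pi_2)$ to two distinct measures on $Y$, which is precisely the obstruction exhibited by the Dirac-to-two-Diracs example preceding the statement.
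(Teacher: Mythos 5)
Your proof is correct, and it follows the same overall template as the paper's: build a single optimal base coupling by convex combination, get its optimality from $c$-CM compatibility via Theorem~\ref{thm:kantdual}, and recover each $\gamma_i$ by multiplying the base coupling by the Radon--Nikodym density of $\mu_i$ against the base marginal. The genuine difference is the choice of base coupling, and it is not cosmetic. The paper averages the three probability couplings, taking $\gamma=\tfrac13(\pi_1+\pi_2+\pi_3)$, so the density is the affine expression $3a\mathds{1}_{A_x}+3f(x)$ on the relevant $X$-supports; you instead average the two given couplings, $\gamma_3=\tfrac12(\gamma_1+\gamma_2)$, exactly as in Theorem~\ref{thm:cCMprod}, and absorb the overlap on $\supp(\pi_2)$ into the ratio density $2f/(f+g)$. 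Your choice buys two small but real advantages: first, $\supp(\gamma_3)\subset\supp(\gamma_1)\cup\supp(\gamma_2)$ is $c$-CM directly from the compatibility hypothesis, whereas $\supp(\pi_2)$ need not be contained in $\supp(\gamma_1)\cup\supp(\gamma_2)$ where $f+g$ vanishes, so the paper's assertion that $\tfrac13(\pi_1+\pi_2+\pi_3)$ is optimal ``due to the $c$-CM compatibility'' needs an extra word that your version does not; second, your density is bounded by $2$ unconditionally (since $0\le f\le f+g$), so $\mu_1\in\ACP^b_c(\mu_3)$ and $K^{\gamma_3}$ is defined on it within the paper's own framework, whereas $3a\mathds{1}_{A_x}+3f(x)$ is bounded only if $f$ is. The price is the bookkeeping on the $\mu_3$-null set $\{f+g=0\}$, which you handle correctly, and your closing remark about why a shared conditional $\pi_2$ on the overlap is essential is exactly the right reading of the Dirac counterexample preceding the theorem.
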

\begin{proof}
Each $\pi_i$ is an optimal coupling from Corollary~\ref{corr:rnrestriction}. 
The coupling $\gamma = \frac{1}{3}(\pi_1+\pi_2+\pi_3)$ is an optimal coupling due to the $c$-CM compatibility, and let 
$\mu = (\proj_X)_\#\gamma$.

Let $K^\gamma$ be the optimal transport kernel associated with $\gamma$. 
Let $A=\supp(\pi_1)$, $B = \supp(\pi_2)$ and $C = \supp(\pi_3)$, and $A_x, B_x, C_x$ the corresponding $X$-supports.
Let $\eta_i$ be the $(\proj_X)_\#\pi_i$.

Then as in Theorem~\ref{thm:cCMprod}, we utilize the fact that a function supported on $A_x$ becomes a function support on $A$ when we restrict it to the support of $\gamma$, and likewise for $B_x$ and $C_x$.
Then, let $\mu_1 = (a\eta_1+f(x)\eta_2)$.
Notice that $\frac{d\mu_1}{d\mu} = 3a\mathds{1}_{A_x}+3f(x)$.
Then observe that
\begin{eqnarray}
   \mu_1\odot K^\gamma &=& (3a\mathds{1}_{A_x}+3f(x))(\frac{1}{3}(\gamma_1+\gamma_2+\gamma_3))\\
   &=& a\gamma_1+f(x)\gamma_2 = \gamma_1.
\end{eqnarray}
This shows that $\mu_1\odot K^\gamma = \gamma_1$.
Similarly, by setting $\mu_2 = g(x)\eta_2 + b\eta_3$, we will obtain that 
\begin{equation}
    \mu_2 \odot K^\gamma  =\gamma_2.
\end{equation}
\end{proof}

The work in this section shows how situations with multiple couplings may be simplified by viewing the couplings as arising from one transport kernel. 
In order to recognize such situations, we defined the notion of $c$-CM compatibility, which does a good job at recognizing this relationship. 
\subsection{A Note on Geodesics}
An interesting relationship between optimal transport and stochastic processes that deserves to be further explored is the relationship between the family of transition kernels generated by a geodesic in Wasserstein space and the Chapman-Kolmogorov equation for stochastic processes. 
Because path-lines of geodesics may cross in Wasserstein spaces when $p\geq2$, the kernels associated will form an inhomogenous stochastic process.

\section{Insights into Signed Optimal Transport using Kernels}\label{sec5}

In most modern formulations of optimal transport, the marginals and optimal coupling are all positive measures. 
While other frameworks examine the problem in different ways, positivity remains a necessary feature.
Optimal transport involving signed quantities is comparatively new, but is an active area \cite{ambrosio2011gradient,mainini2012description,piccoli2013transport,piccoli2014generalized,piccoli2016properties,piccoli2019wasserstein}. 
For example, applications have arisen in seismic imaging \cite{engquist2013application,engquist2016optimal,engquist2018seismic} and for modelling signed vortices \cite{ambrosio2011gradient}. 
However each application has used an ad hoc approach to make the signed measure into a positive one, e.g. exponentiating the density, adding a large constant to make the density positive, and treating the positive and negative parts separately.

Optimal transport kernels are a natural tool for a unified treatment of optimal transport for positive and signed measures.
While the theory developed in this section is more restrictive than some of the previous approaches, it provides a strong connection between the theories for the optimal transport of positive measures and of signed measures. 

\subsection{Signed Optimal Transport}
In this section we denote signed measures with Latin letters, e.g. $a,~b$ and the associated positive absolute value measures either as $\| a\|,~\| b\| $ or with Greek letters, i.e. $\alpha,~\beta$. 
Let $M(X)$ denote the space of signed measures over a Polish space $X$. 
The class of signed measures that we focus on are those $a\in M(X)$ with the following properties
\begin{enumerate}
    \item $\int_X da < \infty$ (finite integral),
    \item $\int_X d\alpha<\infty$ (finite mass),
    \item $\int_X c_X(x)  d\alpha(x)<\infty$ (finite moment).
\end{enumerate}
Analogous requirements will hold for measures $b\in M(Y)$.
We will typically consider the case when $X$ and $Y$ are two copies of the same space, but continue to label them as $X$ and $Y$ for clarity.
The requirements on the signed measures $a$ and $b$ ensure that there is an optimal transport coupling with finite cost between the corresponding positive measures $\alpha$ and $\beta$.

We will be looking at optimal couplings between two positive measures that have the same mass, but are not necessarily probability measures as the total mass may no longer be 1.
They must still satisfy the marginalization constraints and have $c$-CM support, but the marginals are no longer required to be probability measures.

\begin{definition}
A signed coupling $g$ is an \textbf{optimal signed coupling} between the marginals $a$ and $b$, which have finite integral, mass, and moment if it is equal to the product measure derived from $aK$ for some optimal transport kernel $K$.
\end{definition}

For two signed measures $a$ and $b$ to be connected by an optimal transport kernel,
they must have the same integral. 
This is because if $b=aK$, then 
$$
b(Y)=\int_X K(x,Y)da(x)=\int_X da(x),
$$
since $K$ is a Markov kernel and $K(x,Y)=1$ for all $x\in X$.
While $a$ and $b$ do not need to have the same mass, we will treat this as the standard.  
Kernels which do not preserve mass are considered special. 

A salient feature and restriction of the kernel based approach to signed optimal transport comes in the following theorem.

\begin{theorem}
\label{thm:kernelsandsignedmeasures}
Let $a$ and $b$ be two signed measures with equal mass and integral, and each with finite moment.
Let $a=a^+-a^-$ and $b = b^+ - b^-$ be the Jordan decompositions of $a$ and $b$ respectively. 
Then $a$ and $b$ are connected by an optimal transport kernel if and only if there are compatible optimal couplings $\gamma^+$, between $a^+$ and $b^+$, and $\gamma^-$, between  $a^-$ and $b^-$.
\end{theorem}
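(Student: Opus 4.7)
The plan is to prove both implications using the machinery of $c$-CM compatibility (Section~\ref{subsec3}) together with the kernel $K^\gamma$ associated to an optimal coupling. The central structural fact is that the Jordan decomposition $a=a^+-a^-$ comes from a Hahn decomposition of $X$ into disjoint sets $P$ and $N$ on which $a^+$ and $a^-$ respectively live; this disjointness of $X$-supports is what allows a single kernel to act on the signed measure $a$ as the difference of its actions on $a^+$ and $a^-$.

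For the direction $(\Leftarrow)$, suppose compatible optimal couplings $\gamma^+$ and $\gamma^-$ are given. Set $\gamma:=\gamma^++\gamma^-$. By $c$-CM compatibility, $\supp(\gamma^+)\cup\supp(\gamma^-)$ is $c$-cyclically monotone, so $\gamma$ is an optimal coupling between the positive marginals $\|a\|=a^++a^-$ and $\|b\|=b^++b^-$; let $K^\gamma$ be its associated optimal transport kernel. Because $a^+$ and $a^-$ have disjoint $X$-supports $P$ and $N$, the Radon--Nikodym derivative $d a^+/d\|a\|$ equals $\mathds{1}_P$, and $\mathds{1}_P\cdot\gamma=\mathds{1}_P\cdot\gamma^++\mathds{1}_P\cdot\gamma^-=\gamma^+$ since $\supp(\gamma^-)\subset N\times Y$. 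Thus $a^+\odot K^\gamma=\gamma^+$ and similarly $a^-\odot K^\gamma=\gamma^-$. Pushing forward to $Y$ and taking the signed combination yields $aK^\gamma=a^+K^\gamma-a^-K^\gamma=b^+-b^-=b$, so $K^\gamma$ is the desired optimal transport kernel connecting $a$ and $b$.

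For the direction $(\Rightarrow)$, suppose $K$ is an optimal transport kernel with $b=aK$. Define $\gamma^\pm:=a^\pm\odot K$; since $K$ is optimal and $a^\pm$ are positive measures in the appropriate $\ACP^b_c$ class relative to $\|a\|$, each $\gamma^\pm$ is an optimal coupling with $X$-marginal $a^\pm$. Compatibility is immediate: both supports lie in $\supp(\|a\|\odot K)$, which is $c$-CM because $\|a\|\odot K$ is an optimal coupling. The remaining task, and the main obstacle, is to verify that the $Y$-marginals are exactly $b^+$ and $b^-$ rather than a pair of positive measures whose signed difference gives $b$ only after cancellation.

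The $Y$-marginals $a^+K$ and $a^-K$ satisfy $a^+K-a^-K=aK=b=b^+-b^-$, equivalently $a^+K+b^-=a^-K+b^+$. By uniqueness of the Jordan decomposition, the identifications $a^+K=b^+$ and $a^-K=b^-$ follow as soon as $a^+K$ and $a^-K$ are mutually singular. I would establish this mutual singularity by combining the Hahn decomposition of $X$ with the $c$-CM structure of $\supp(\|a\|\odot K)$: any point $y\in Y$ receiving mass from both $a^+$ and $a^-$ corresponds to pairs $(x_+,y)$ and $(x_-,y)$ in the support with $x_+\in P$ and $x_-\in N$, and a short $c$-CM swap argument (analogous to the one underlying the Kantorovich duality) should rule out a positive-measure set of such coincidences, possibly after invoking the equal-mass hypothesis to exclude degenerate configurations. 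This singularity step is where I expect the real difficulty to lie, since the abstract structure of a Markov kernel does not by itself prevent cancellation; it is the optimality of $K$ together with the disjointness of $P$ and $N$ that must be leveraged.
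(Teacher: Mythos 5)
Your backward direction is essentially the paper's proof: form $\gamma=\gamma^++\gamma^-$, use compatibility to get optimality, use the disjointness of the Hahn sets to identify $\tfrac{da^\pm}{d\|a\|}$ with indicator functions and hence $a^\pm\odot K^\gamma=\gamma^\pm$, and push forward. That part is fine. The forward direction, however, has a genuine gap exactly where you locate it, and the route you sketch for closing it would not work. You propose to show that $a^+K$ and $a^-K$ are mutually singular by a $c$-CM swap argument ruling out pairs $(x_+,y)$ and $(x_-,y)$ in the support with $x_+\in P$, $x_-\in N$. But $c$-cyclic monotonicity does not forbid two distinct sources from sharing a target: the paper's own closing example, where an optimal kernel sends $\chi_{[0,2]}$ to $2\delta_1$ and hence $a=\chi_{[0,1]}-\chi_{(1,2]}$ to $0$, has perfectly $c$-CM support yet $a^+K=a^-K=\delta_1$, so mutual singularity fails and all the cancellation you worry about actually occurs. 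Optimality alone cannot exclude this; only the equal-mass hypothesis can, and your sketch never says how it enters.

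The paper closes the gap by elementary mass counting rather than singularity. From $aK(B)=\int K(x,B)\,da^+-\int K(x,B)\,da^-\le a^+K(B)$ for every $B$ one gets $b^+=(aK)^+\le a^+K$. Since $K$ is Markov, $\|a^+K\|=\|a^+\|$, and the equal mass and integral hypotheses force $\|a^+\|=\|b^+\|$; a positive measure dominated by another of the same total mass equals it, so $a^+K=b^+$, and symmetrically $a^-K=b^-$. Compatibility of $\gamma^\pm:=a^\pm\odot K$ then follows as you say, since both supports sit inside the $c$-CM support of $\|a\|\odot K$. Your appeal to uniqueness of the Jordan decomposition is logically sound \emph{if} mutual singularity were established, but establishing it directly is as hard as the theorem itself; the domination-plus-equal-mass argument is the missing idea.
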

\begin{proof}
Let $\alpha=\|a\|=a^++a^-$ and $\beta=\|b\|=b^++b^-$.
Suppose that $a$ and $b$ are connected by an optimal transport kernel $K$, i.e. $aK=b$.

Since $a$ and $b$ have the same integral, $$\int_Xda=\|a^+\|-\|a^-\|=\|b^+\|-\|b^-\|=\int_Xdb,$$ and the same mass, $$\int_X d\alpha = \|a^+\|+\|a^-\|=\|b^+\|+\|b^-\|=\int_X d\beta,$$ 
then 
$$\|a^+\|=\|b^+\| \text{ and } \|a^-\|=\|b^-\|.$$

We want to first show that $a^+K=b^+$ and $a^-K=b^-$.
Notice that for any set $B$, 
\begin{equation}
\begin{split}
aK(B)=\int_X K(x,B)da&=\int_X K(x,B)da^+-\int_X K(x,B)da^- \\
&\leq \int_X K(x,B)da^+=a^+K(B).  
\end{split}
\end{equation}
Therefore $(aK)^+\leq (a^+K)^+$.
Now, since $aK=b$, we have $(aK)^+=b^+$.
We also already have that  $a^+K$ is a positive measure, so $(a^+K)^+=a^+K$.  Since $K$ is a Markov kernel, we have  $\|a^+K\|=\|a^+\|$, implying that
$\|a^+K\|=\|b^+\|$. 
Hence $(aK)^+=(a^+K)^+$ and  $b^+=a^+K$, that is, when $b$ is the image of $a$ from the kernel $K$, then $b^+=a^+K$. 

We note that in general, $b^+\leq a^+K$, but here we have equality since the measures have equal mass. Similarly we find $b^- = a^- K$, while in general $b^-\leq a^-K$.  
This yields  $a^+\odot K=\gamma^+$ and $a^-\odot K=\gamma^-$.
These measures will be compatible because $\alpha\odot K$ is an optimal coupling, and $\supp(\alpha\odot K)=\supp(\gamma^+)\cup \supp(\gamma^-)$. 

For the other direction, suppose that $\gamma^+$ and $\gamma^-$ are compatible and let $\gamma:=\gamma^++\gamma^-$. 
Observe that $\gamma$ is an optimal coupling between its marginals. 

Now $a^+\perp a^-$ and both are absolutely continuous with respect to $\alpha:=(\proj_x)_\#(\g)$. 
Let $K^\g$ be the transition kernel associated to $\g$, defined on measures that are absolutely continuous with respect to $\alpha$. 

Let $f_+(x)$ be the Radon-Nikodym derivative of $a^+$ with respect to $\alpha$, which will be equal to $+1$ on $\supp(a^+)\subset\supp(\alpha)$ and $0$ on $\supp(a^-)$ since $a^+\perp a^-$.
Let $f_-$ be the Radon-Nikodym derivative of $a^-$ with respect to $\alpha$, and similar statements will hold.
Then $f_+(x)\g=\gamma^+$ and $f_-(x)\g=\g^-$, so 
\begin{equation}
    aK^\g=(\proj_y)_\#\left((f_+(x)-f_-(x))\g\right)=(\proj_y)_\#(\gamma^+-\gamma^-)=b.
\end{equation}
In this way, $\g$ acts as a kernel sending $a$ to $b$.
\end{proof}

Theorem \ref{thm:kernelsandsignedmeasures} tells us that we will not be able to connect any two arbitrary signed measures together, only the ones where the positive and negative couplings are $c$-CM compatible.
This is restrictive for some versions of signed optimal transport that want to be able to connect any two signed measures in the same way as probability measures, but it is an inherent limitation from the approach focusing on $c$-CM as the salient feature of optimality.

Unfortunately, in two dimensions and higher it is not possible to partition the space of signed measures into classes such that there exists a kernel between all measures within a class and that any target for which there is a kernel is an element of the class.
We demonstrate this with the following example.
\begin{example}
\label{ex:kernhigherdims}
Let $X=\mathbb{R}^2$, and
$a_1=\delta_{(1,.5)}-\delta_{(-1,-.5)}$,
$a_2=\delta_{(-1,.5)}-\delta_{(1,-.5)}$, and
$a_3=\delta_{(0,1)}-\delta_{(0,-1)}$.
\end{example}
In this example there are optimal transport kernels $K_1$ and $K_3$ such that $a_1K_1=a_3$ and $a_3K_2=a_2$.
However, Theorem \ref{thm:kernelsandsignedmeasures} says that there is no kernel sending $a_1$ to $a_2$. 
This example more generally demonstrates that signed measures that in two dimensions or higher are not transitive. 
However they are in one dimension, and there is a simple tool that we can use to signify when two signed measures are in the same equivalence class. 
\subsection{Review of One-Dimensional Optimal Transport}

Here, we present the components of one-dimensional optimal transport theory that are directly needed for our work on one-dimensional signed measures.
Interested readers can consult Chapter 2 of \cite{santambrogio2015optimal} for a full treatment of this subject.

\begin{theorem}
{\cite[Pg. 60]{santambrogio2015optimal}.}
\label{prop:pseudoinvpush}
If $\mu\in \Pro(\mathbb{R} )$ and $G_\mu$ is the pseudo-inverse of its CDF $F_\mu$, then $(G_\mu)_\#(\mathscr{L}\restriction [0,1])=\mu$, where $\mathscr{L}$ is the Lebesgue measure.
\end{theorem}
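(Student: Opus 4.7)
The plan is to verify the pushforward identity by comparing CDFs. Since a Borel probability measure on $\mathbb{R}$ is uniquely determined by its cumulative distribution function, it suffices to show that $(G_\mu)_\#(\mathscr{L}\restriction[0,1])$ has CDF equal to $F_\mu$. The definition of pushforward gives, for each $x\in\mathbb{R}$,
\begin{equation*}
(G_\mu)_\#(\mathscr{L}\restriction[0,1])\bigl((-\infty,x]\bigr) = \mathscr{L}\bigl(\{t\in[0,1] : G_\mu(t)\leq x\}\bigr),
\end{equation*}
so the task reduces to computing the Lebesgue measure of this sublevel set of $G_\mu$.

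The crux of the argument is the standard equivalence
\begin{equation*}
G_\mu(t) \leq x \iff t \leq F_\mu(x),
\end{equation*}
valid for $t\in(0,1)$ and $x\in\mathbb{R}$, which follows from the definition $G_\mu(t) = \inf\{x : F_\mu(x)\geq t\}$ together with the right-continuity and monotonicity of $F_\mu$. First I would establish this equivalence carefully: for the forward direction, if $G_\mu(t)\leq x$ then by right-continuity of $F_\mu$ at $G_\mu(t)$ one has $F_\mu(G_\mu(t))\geq t$, and monotonicity gives $F_\mu(x)\geq F_\mu(G_\mu(t))\geq t$; for the reverse, if $t\leq F_\mu(x)$ then $x$ lies in the set whose infimum is $G_\mu(t)$, giving $G_\mu(t)\leq x$.

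Granted this equivalence, the sublevel set $\{t\in[0,1] : G_\mu(t)\leq x\}$ equals $[0,F_\mu(x)]$ up to possibly the endpoints $t=0$ and $t=1$ (which contribute measure zero), so its Lebesgue measure is exactly $F_\mu(x)$. Therefore
\begin{equation*}
(G_\mu)_\#(\mathscr{L}\restriction[0,1])\bigl((-\infty,x]\bigr) = F_\mu(x) = \mu\bigl((-\infty,x]\bigr)
\end{equation*}
for every $x$, and uniqueness of the measure determined by a CDF yields $(G_\mu)_\#(\mathscr{L}\restriction[0,1]) = \mu$.

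The main obstacle is purely technical rather than conceptual: namely, being careful about the endpoints and about points $x$ at which $F_\mu$ jumps or has flat regions, so that the equivalence $G_\mu(t)\leq x \iff t\leq F_\mu(x)$ is applied correctly. Once this equivalence is nailed down, the rest is essentially the definition of pushforward plus the identification of a CDF with its measure.
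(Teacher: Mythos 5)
The paper does not prove this statement at all; it is quoted directly from Santambrogio's book (the cited reference), so there is no in-paper argument to compare against. Your proof is correct and is the standard one: the equivalence $G_\mu(t)\leq x \iff t\leq F_\mu(x)$ for $t\in(0,1)$ is established properly via right-continuity and monotonicity of $F_\mu$, the endpoint and degenerate cases are handled (they contribute measure zero), and the conclusion follows from the fact that a Borel probability measure on $\mathbb{R}$ is determined by its CDF.
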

The psuedo-inverse is an important tool for us to construct a particular coupling. 

\begin{definition}[{\cite[Pg. 61]{santambrogio2015optimal}}]
We call the coupling $(G_\mu,G_\nu)_\# (\mathscr{L}\restriction[0,1])$
the \textbf{monotone coupling} and denote it by $\gamma_{mon}$. 
\end{definition}

The monotone coupling has the property that 
\begin{equation*}
\gamma_{mon}((-\infty,c_i]\times (-\infty,d_i]) = \min(F_\mu(c_i),F_\nu(d_i)) = F_\mu(c_i)\wedge F_\nu(d_i) 
\end{equation*}
The reason that we distinguish one-dimensional optimal transport is because in one dimension $c$-CM and monotonicity will correspond for a large class of cost functions, including the cost functions for the Wasserstein spaces.
In these cases, the monotone coupling will be the optimal coupling. 

\begin{theorem}[{\cite[Pg. 63]{santambrogio2015optimal}}]
\label{thm:1dot}
Let $h:\mathbb{R}\mapsto \mathbb{R}_+$ be a strictly convex function and $\mu,\nu\in\Pro(\mathbb{R})$ be probability measures. Consider the cost $c(x,y)=h(y-x)$ and suppose that the optimal transport cost is finite. Then, the optimal transport problem has a unique solution given by $\gamma_{mon}$, the monotone coupling  between $\mu$, and $\nu$.

Moreover, if strict convexity is withdrawn and h is only convex then the same $\gamma_{mon}$ is an optimal transport plan, but may no longer be unique.
\end{theorem}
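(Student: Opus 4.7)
The plan is to reduce the theorem to showing that $\gamma_{mon}$ has $c$-cyclically monotone support, since once this is established, Theorem~\ref{thm:kantdual} (Kantorovich Duality) immediately delivers optimality: the cost is finite by hypothesis, $c(x,y)=h(y-x)$ is continuous and non-negative, and a coupling with $c$-CM support concentrated on a $c$-CM set is optimal. Uniqueness (under strict convexity) will then follow from a separate rigidity argument.

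First I would verify that $\supp(\gamma_{mon})$ is monotone in the sense that $(x_1,y_1),(x_2,y_2)\in \supp(\gamma_{mon})$ with $x_1<x_2$ forces $y_1\le y_2$. By Theorem~\ref{prop:pseudoinvpush} the coupling is the pushforward of Lebesgue measure on $[0,1]$ by the map $t\mapsto (G_\mu(t),G_\nu(t))$, and both pseudo-inverses are non-decreasing; so the support lies in the graph-like set $\{(G_\mu(t),G_\nu(t)):t\in[0,1]\}$, which is monotone.

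Next I would upgrade pairwise monotonicity of the support to full $c$-cyclic monotonicity of it. The crucial ingredient is the two-point interchange inequality: if $x_1<x_2$ and $y_1<y_2$, then convexity of $h$ yields
\begin{equation*}
h(y_1-x_1)+h(y_2-x_2)\le h(y_2-x_1)+h(y_1-x_2),
\end{equation*}
with strict inequality when $h$ is strictly convex. This follows from the standard fact that for convex $h$ the function $s\mapsto h(s+\delta)-h(s)$ is non-decreasing in $s$ for $\delta\ge 0$, applied to $\delta=y_2-y_1$ and $s=y_1-x_2<y_1-x_1$. Given $N$ points $(x_i,y_i)$ in a monotone set and a permutation $\sigma$, I would sort by $x$-coordinate and then decompose $\sigma$ into a sequence of adjacent transpositions using a bubble-sort argument; each transposition that moves the configuration closer to the identity is one instance of the two-point interchange above and therefore does not increase the sum $\sum h(y_{\sigma(i)}-x_i)$. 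This establishes the $N$-point $c$-cyclic monotonicity inequality of Definition~\ref{def:cCM}. Applying Theorem~\ref{thm:kantdual} then yields optimality of $\gamma_{mon}$ in both the strict and non-strict convex cases.

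Finally, for uniqueness under strict convexity, I would take any optimal $\gamma'\in\Pi(\mu,\nu)$. By Theorem~\ref{thm:kantdual} its support is $c$-CM. If there were points $(x_1,y_1),(x_2,y_2)\in\supp(\gamma')$ with $x_1<x_2$ but $y_1>y_2$, the strict version of the interchange inequality would contradict the $2$-point case of $c$-cyclic monotonicity; hence $\supp(\gamma')$ is monotone, and the only coupling of $\mu$ and $\nu$ supported on a monotone set is $\gamma_{mon}$ (this is a standard consequence of the CDF characterization: the marginal constraints together with monotonicity of the support force $\gamma'((-\infty,x]\times(-\infty,y])=F_\mu(x)\wedge F_\nu(y)$ for all $x,y$, which uniquely determines the measure). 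The main obstacle I anticipate is the inductive/combinatorial step from $2$-point to $N$-point cyclic monotonicity; everything else is either packaged in Theorem~\ref{thm:kantdual} or is an elementary convexity computation.
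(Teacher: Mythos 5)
The paper does not actually prove this theorem; it is imported verbatim from Santambrogio's book (the citation on the statement is the proof, as far as this paper is concerned), so there is no in-paper argument to compare against. Your proposal is correct and is essentially the standard textbook route, but it is worth noting where it differs from Santambrogio's own proof. He establishes the strictly convex case by starting from an arbitrary optimal plan (whose existence is guaranteed), using the two-point $c$-CM inequality plus strict convexity to force its support to be monotone, and then invoking the rigidity lemma (monotone support plus fixed marginals determines the plan) to conclude it must equal $\gamma_{mon}$; optimality of $\gamma_{mon}$ and uniqueness drop out simultaneously. The merely convex case is then handled by perturbing the cost by a small strictly convex term and passing to the limit. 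You instead verify directly that $\supp(\gamma_{mon})$ is $c$-cyclically monotone for every $N$, via the two-point interchange inequality and a bubble-sort decomposition of the permutation, and then invoke Theorem~\ref{thm:kantdual}; this handles the convex and strictly convex cases in one stroke without any approximation argument, at the price of the combinatorial step you correctly identify as the main labor (which does go through: each adjacent transposition removing an inversion is one instance of the interchange inequality, and ties in the $x$- or $y$-coordinates contribute equalities). Your uniqueness argument coincides with Santambrogio's. Two small points to tighten: the support of $\gamma_{mon}$ is the \emph{closure} of the image of $t\mapsto(G_\mu(t),G_\nu(t))$, so you should note that monotonicity of a set passes to its closure; and in the uniqueness step you need the support (not merely some carrier) of the competitor $\gamma'$ to be $c$-CM, which holds here because $c(x,y)=h(y-x)$ is continuous ($h$ being finite and convex on $\mathbb{R}$), exactly the situation flagged in the remark after Theorem~\ref{thm:kantdual}.
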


Theorem \ref{thm:1dot} tells us that the monotone coupling is the optimal coupling for the appropriate cost functions.
We now prove a result that will be useful when we move to one-dimensional signed optimal transport.

\begin{lemma}
\label{lem:monotoneplanstructure}
Let $\mu$ and $\nu$ be measures with points $c_i$ and $d_i$ for $i$ from $1$ to $N-1$ such that $F_\mu(c_i)=F_\nu(d_i)\neq 0$. 
Let $\gamma_{mon}$ be the monotone coupling between them. 
Also let $c_0,d_0=-\infty$ and $c_{N}, d_{N}=\infty$.
Then $\supp(\gamma_{mon})\subset \cup_{i=1}^N (c_{i-1},c_i]\times (d_{i-1},d_i]$.
\end{lemma}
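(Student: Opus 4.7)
The plan is to prove the containment by showing that $\gamma_{mon}$ assigns zero mass to every ``off-diagonal'' rectangle of the form $(c_{i-1},c_i]\times(d_{j-1},d_j]$ with $i\neq j$. Such rectangles partition the complement of $\cup_{i=1}^N (c_{i-1},c_i]\times(d_{i-1},d_i]$ in $\mathbb{R}^2$ (using the conventions $c_0=d_0=-\infty$, $c_N=d_N=\infty$, so the union covers both tails). Once the off-diagonal rectangles are shown to be $\gamma_{mon}$-null, the measure is concentrated on the union of diagonal blocks and the support containment follows.

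To carry this out, set $p_i := F_\mu(c_i) = F_\nu(d_i)$ for $i = 0, 1, \dots, N$, so that $p_0 = 0$, $p_N = 1$, and $p_{i-1} \leq p_i$ by monotonicity of the CDFs. The identity $\gamma_{mon}((-\infty, c] \times (-\infty, d]) = F_\mu(c) \wedge F_\nu(d)$ stated just before the lemma, combined with inclusion-exclusion on the four corners of the rectangle $(c_{i-1}, c_i] \times (d_{j-1}, d_j]$, expresses $\gamma_{mon}((c_{i-1}, c_i] \times (d_{j-1}, d_j])$ as a signed sum of four minima of $p$-values. If $i < j$ then $p_i \leq p_{j-1} \leq p_j$, so each of the four minima reduces to the corresponding value on the $F_\mu$ side, and the sum telescopes as $p_i - p_{i-1} - p_i + p_{i-1} = 0$. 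The case $i > j$ is symmetric: each minimum reduces to the corresponding value on the $F_\nu$ side, yielding $p_j - p_j - p_{j-1} + p_{j-1} = 0$.

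I expect the main obstacle to be essentially clerical: one has to verify that all four minima in the inclusion-exclusion collapse uniformly to the same side, which hinges crucially on the matching condition $F_\mu(c_i) = F_\nu(d_i)$ at every index. Without this alignment the telescoping would fail. Geometrically the lemma just records that the monotone coupling lives on the ``staircase'' traced out by $(G_\mu, G_\nu)$ as the parameter crosses the levels $p_i$, and the computation above is the analytic manifestation of that picture.
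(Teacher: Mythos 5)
Your proof is correct and rests on the same two ingredients as the paper's: the identity $\gamma_{mon}((-\infty,c]\times(-\infty,d])=F_\mu(c)\wedge F_\nu(d)$ together with the alignment condition $F_\mu(c_i)=F_\nu(d_i)$. The only difference is bookkeeping — you partition the off-diagonal region into rectangles $(c_{i-1},c_i]\times(d_{j-1},d_j]$, $i\neq j$, and kill each by a four-corner inclusion--exclusion, whereas the paper covers that region by the quadrants $(c_i,\infty)\times(-\infty,d_i]$ and $(-\infty,c_i]\times(d_i,\infty)$ and kills each by a one-line subtraction; both arguments (and the paper's own) pass silently from ``concentrated on the union of half-open blocks'' to the stated containment of the closed support, which strictly speaking only gives containment in the closure of that union.
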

\begin{proof}
We prove this by showing that $\gamma_{mon}((c_i,\infty)\times(-\infty,d_i])=0$ and note that the proof would hold for any $i$ as well as for reversing the roles of $c_i$ and $d_i$.
Consider the following:
\begin{eqnarray*}
F_\nu(d_i)&=&\gamma_{mon}((-\infty,\infty)\times (-\infty,d_i])\\
&=& \gamma_{mon}((-\infty,c_i]\times(-\infty,d_i])+\gamma_{mon}((c_i,\infty)\times(-\infty,d_i])\\
&=& F_\mu(c_i)\wedge F_\nu(d_i)+\gamma_{mon}((c_i,\infty)\times(-\infty,d_i])\\
&=& F_\nu(d_i)+ \gamma_{mon}((c_i,\infty)\times(-\infty,d_i]).
\end{eqnarray*}
Subtracting $F_\nu(d_i)$ from both sides shows that $\gamma_{mon}((c_i,\infty)\times(-\infty,d_i])=0$.
This argument can be done in the same way starting with $F_\mu(c_i)$ to show that $\gamma_{mon}((-\infty,c_i]\times(d_i,\infty))=0$.
Together these prove our claim as we have shown that no mass lies in the region outside of 
$\cup_{i=1}^N (c_{i-1},c_i]\times (d_{i-1},d_i]$.

\end{proof}

\subsection{One-Dimensional Signed Optimal Transport}
We now look at measures on $\mathbb{R}$ which not only have finite mass, measure and moment, but also the property of finite-length signature, as defined by
\begin{definition}
A signed measure $a$ is said to have \textbf{finite-length signature} if it has finite mass and can be decomposed into $a=\sum_{i=1}^n a^i$ for some $n$, and the $a^i$ measures are mutually singular with support contained in an interval $(p_{i-1},p_i]$, with $p_0=-\infty$ and $p_n=\infty$ and each $a^i$ is equal to either $a^+ \restriction_ (p_{i-1},p_i]$ or $-a^-\restriction_ (p_{i-1},p_i]$. 
We further require that the measure $a^i$ has opposite sign to the measures $a^{i-1}$ and $a^{i+1}$ when they exist.
The signed measure $a$ then has \textbf{signature} $(z_1,...,z_n)$ with $z_i:=\int_{p_{i-1}}^{p_i} da$.
The intervals $(p_{i-1},p_i]$ are called the \textbf{signature intervals} of $a$.
\end{definition}

The signature of a measure is meant to encapsulate the order of the positive and negative mass of the measure. It has the following properties:
\begin{enumerate}
    \item $\int_\mathbb{R} da =\sum_j z_j$
    \item $\int_\mathbb{R} d\alpha =\sum_j \|z_j\|$
    \item $\int_{-\infty}^{p_i}da=\sum_{j=1}^{i} z_j$. 
\end{enumerate}

We now rephrase Lemma \ref{lem:monotoneplanstructure} in the context of signed measures with the same signature.
\begin{lemma}
\label{lem:signedmeasuremonotonestructure}
Let $\alpha$ and $\beta$ be positive measures coming from signed measures $a$ and $b$ that both have the same signature $(z_i)_{i=1}^n$ and with signature intervals $(p_{i-1},p_i]$ and $(q_{i-1},q_i]$. 
Then the support of the optimal plan $\gamma$ is contained in 
$$\cup_{i=1}^n  (p_{i-1},p_i]\times (q_{i-1},q_i].$$ 
\end{lemma}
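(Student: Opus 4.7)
The plan is to reduce the statement to Lemma~\ref{lem:monotoneplanstructure} by verifying that the CDFs of $\alpha$ and $\beta$ agree at the endpoints of the signature intervals. The shared signature will supply exactly the data needed.

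First I would unpack what equal signatures gives us about $\alpha$ and $\beta$. By the definition of signature, each piece $a^i$ of $a$ is either $a^+\restriction_{(p_{i-1},p_i]}$ or $-a^-\restriction_{(p_{i-1},p_i]}$, so on the interval $(p_{i-1},p_i]$ the measure $a$ is of constant sign and $\alpha((p_{i-1},p_i]) = |z_i|$. The same reasoning applied to $b$ gives $\beta((q_{i-1},q_i]) = |z_i|$. Summing over $j \leq i$ yields
\[
F_\alpha(p_i) = \sum_{j=1}^i |z_j| = F_\beta(q_i), \qquad i = 1,\dots,n-1,
\]
with the obvious extensions at $i=0,n$. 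In particular $\alpha$ and $\beta$ have equal total mass $\sum_j |z_j|$, so (after normalizing by this common mass) the monotone coupling between them is well defined and is the optimal coupling by Theorem~\ref{thm:1dot}.

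Next, I would apply Lemma~\ref{lem:monotoneplanstructure} directly, with the identifications $\mu = \alpha$, $\nu = \beta$, $c_i = p_i$, $d_i = q_i$, and using the CDF-matching established above. The lemma then tells us that the support of the monotone coupling, and hence of the optimal plan $\gamma$ between $\alpha$ and $\beta$, lies in $\bigcup_{i=1}^n (p_{i-1},p_i] \times (q_{i-1},q_i]$, which is exactly the claim.

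The only subtlety I anticipate is the bookkeeping between positive measures and probability measures: Lemma~\ref{lem:monotoneplanstructure} is phrased for probability measures, while $\alpha$ and $\beta$ have common total mass $\sum_j |z_j|$, not necessarily equal to one. Rescaling both marginals by this common factor produces probability measures whose CDFs still match at the matched points, so Lemma~\ref{lem:monotoneplanstructure} applies and the support statement is invariant under the rescaling. A secondary point worth stating cleanly is that the optimal plan referenced in the lemma is precisely the monotone plan; this follows from Theorem~\ref{thm:1dot} under the convex-cost assumption implicit in the one-dimensional theory developed above. No deeper obstruction appears—the proof is essentially a one-line appeal to Lemma~\ref{lem:monotoneplanstructure} once the CDF equalities are noted.
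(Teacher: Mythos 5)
Your proof is correct and takes essentially the same route the paper intends: the paper gives no written proof at all, presenting the lemma as a direct rephrasing of Lemma~\ref{lem:monotoneplanstructure}, and your argument supplies exactly the details that rephrasing requires, namely the CDF matching $F_\alpha(p_i)=\sum_{j\leq i}|z_j|=F_\beta(q_i)$ at the signature-interval endpoints. Your two flagged subtleties (rescaling the common total mass to apply the probability-measure lemma, and invoking Theorem~\ref{thm:1dot} to identify the optimal plan with the monotone plan) are both points the paper leaves implicit, so your writeup is if anything more complete than the source.
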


With this lemma, we can show that any two signed measures with the same signature can be sent to one another by the kernel generated by the coupling between the absolute value measures. 

\begin{theorem}
\label{thm:signedcouplingskernel}
Let $a$ and $b$ be signed measures with the same finite-length signature $(z_1,...,z_n)$ and let $\alpha:=\|a\|$, and $\beta:=\|b\|$ be their absolute value measures with appropriate finite moments.
Let $\gamma$ be the optimal coupling between $\alpha$ and $\beta$, and $K^\gamma$ the kernel associated to $\gamma$.
Then $K^\gamma$ sends $a$ to $b$.
\end{theorem}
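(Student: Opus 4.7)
The plan is to verify that $aK^\gamma = b$ directly by computing the measure $aK^\gamma$ on an arbitrary Borel set and showing it agrees with $b$, with the diagonal-block support of $\gamma$ (from Lemma~\ref{lem:signedmeasuremonotonestructure}) and the block-constant Radon--Nikodym derivatives doing all the work.

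First I would identify the Radon--Nikodym derivatives. By the definition of a finite-length signature, $a\restriction_{(p_{i-1},p_i]}$ equals $\alpha\restriction_{(p_{i-1},p_i]}$ when $z_i>0$ and $-\alpha\restriction_{(p_{i-1},p_i]}$ when $z_i<0$, so $\frac{da}{d\alpha}(x) = \sign(z_i)$ for $\alpha$-a.e.\ $x\in(p_{i-1},p_i]$, and likewise $\frac{db}{d\beta}(y) = \sign(z_i)$ for $\beta$-a.e.\ $y\in(q_{i-1},q_i]$. Since $|\frac{da}{d\alpha}|=1$, the RN-derivative is bounded, and $K^\gamma$ extends linearly to $a$ through its Jordan decomposition so that $a\odot K^\gamma = \frac{da}{d\alpha}(x)\,\gamma(x,y)$ as a signed product measure and $aK^\gamma = (\proj_Y)_\#(a\odot K^\gamma)$.

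Next I would use the shared-signature hypothesis to locate $\supp(\gamma)$. Because $a$ and $b$ have the same signature $(z_1,\ldots,z_n)$, the cumulative absolute masses agree at the signature cut-points: $F_\alpha(p_i) = \sum_{j=1}^i |z_j| = F_\beta(q_i)$ for each $i<n$. Applying Lemma~\ref{lem:signedmeasuremonotonestructure} with $c_i=p_i$ and $d_i=q_i$ then yields
\begin{equation*}
\supp(\gamma) \;\subset\; \bigcup_{i=1}^n (p_{i-1},p_i] \times (q_{i-1},q_i].
\end{equation*}

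Finally I would carry out the direct computation: for any Borel $B\subset\mathbb{R}$,
\begin{align*}
aK^\gamma(B) &= \int_{X\times B} \frac{da}{d\alpha}(x)\, d\gamma(x,y) \\
&= \sum_{i=1}^n \sign(z_i)\, \gamma\bigl((p_{i-1},p_i]\times(B\cap(q_{i-1},q_i])\bigr) \\
&= \sum_{i=1}^n \sign(z_i)\, \beta\bigl(B\cap(q_{i-1},q_i]\bigr) \;=\; b(B),
\end{align*}
where the second equality restricts integration to the diagonal blocks, the third equality applies the same support restriction in reverse (replacing the factor $(p_{i-1},p_i]$ by $X$) together with the $Y$-marginal property $\gamma(X\times\,\cdot\,)=\beta$, and the last equality recombines via the signature decomposition of $b$.

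The only delicate piece is Step~2: extracting from the shared-signature hypothesis exactly the CDF alignment that triggers Lemma~\ref{lem:signedmeasuremonotonestructure}. Once that block-diagonal support is in hand, the sign of $\frac{da}{d\alpha}(x)$ necessarily matches the sign of $\frac{db}{d\beta}(y)$ for any $(x,y)\in\supp(\gamma)$, which is precisely the compatibility that lets $K^\gamma$ transport the signed mass of $a$ onto the signed mass of $b$.
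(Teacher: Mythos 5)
Your proof is correct, and it reaches the conclusion by a more direct route than the paper. The paper's proof establishes the same two ingredients you use --- the block-diagonal support $\supp(\gamma)\subset\cup_i (p_{i-1},p_i]\times(q_{i-1},q_i]$ from Lemma~\ref{lem:monotoneplanstructure}, and the block-constancy of the Radon--Nikodym derivatives --- but then packages them differently: it defines $\gamma^{\pm}=f_{\pm}(x)\gamma$, checks that these are $c$-CM compatible optimal couplings between the Jordan components $a^{\pm}$ and $b^{\pm}$ (via the identity $f_{\pm}(x)\gamma=g_{\pm}(y)\gamma$), and then invokes the characterization in Theorem~\ref{thm:kernelsandsignedmeasures} to conclude. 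You instead compute $aK^\gamma(B)$ directly on an arbitrary Borel set, using the signed derivative $\frac{da}{d\alpha}=\sign(z_i)$ on each block together with the marginal property $\gamma(X\times\cdot)=\beta$; this bypasses Theorem~\ref{thm:kernelsandsignedmeasures} entirely and is shorter and self-contained. What the paper's route buys is the explicit exhibition of the compatible couplings $\gamma^{\pm}$, which ties the result into the general framework of Theorem~\ref{thm:kernelsandsignedmeasures}; what yours buys is an elementary verification that makes visible exactly where the shared-signature hypothesis enters, namely in the CDF alignment $F_\alpha(p_i)=\sum_{j\le i}|z_j|=F_\beta(q_i)$ that triggers the support lemma --- a step the paper uses implicitly but does not spell out. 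One small point worth adding: the application of Lemma~\ref{lem:monotoneplanstructure} requires $F_\alpha(p_i)\neq 0$, which holds provided the signature entries are nonzero (as is implicit in the definition of a finite-length signature, since each $a^i$ must alternate in sign).
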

\begin{proof}
We will show that the optimal coupling $\gamma$ (which is also the monotone coupling) between $\alpha$ and $\beta$ is associated to a kernel that sends $a$ to $b$. 

Let $(p_{i-1},p_i]$ and $(q_{i-1},q_i]$ be the signature intervals for $a$ and $b$ respectively. 
From Theorem~\ref{lem:monotoneplanstructure} we know that $\supp(\gamma)\subset\cup_{i=1}^n (p_{i-1},p_i]\times(q_{i-1},q_i]$. 

Notice that the Radon-Nikodym derivative of $a^+$ with respect to $\alpha$ $f_+(x)$ is equal to $+1$ on $(p_{i-1},p_i]$ when $z_i$ is positive and $0$ otherwise, and the Radon-Nikodym derivative of $a^-$ with respect to $\alpha$ $f_-(x)$ is equal to  $+1$ on $(p_{i-1},p_i]$ when $z_i$ is negative and $0$ otherwise. 
Likewise, the Radon-Nikodym derivative of $b^+$ with respect to $\beta$ $g_+(x)$ is equal to $+1$ on $(q_{i-1},q_i]$ when $z_i$ is positive and $0$ otherwise, and the Radon-Nikodym derivative of $b^-$ with respect to $\beta$ $g_-(x)$ is equal to  $+1$ on $(q_{i-1},q_i]$ when $z_i$ is negative and $0$ otherwise.

Thus $f_+(x)\gamma(x,y) = g_+(y)\gamma(x,y)$ and $f_-(x)\gamma(x,y) = g_-(y)\gamma(x,y)$, as both $f_{\pm}(x)$ and $g_\pm(y)$ are constant over each block $(p_{i-1},p_{i}]\times (q_{i-1},q_i]$ with their values depending in the same manner on the sign of $z_i$. 

Letting $\gamma^+ = f_+(x)\gamma(x,y)$, and $\gamma^- = f_-(x)\gamma(x,y)$, it is immediate that they are $c$-CM compatible as they are components of $\gamma$ which is optimal between $\alpha$ and $\beta$. 
We also have $\int_Y d\gamma^+(x,y) = \int_Y f_+(x)d\gamma(x,y) =f_+(x)\alpha(x) = a^+$ and likewise for $b^+$. Similarly $\gamma^-$ will have marginals of $a^-$ and $b^-$. 

Thus from Theorem~\ref{thm:kernelsandsignedmeasures}, we know that $a$ and $b$ are connected by the optimal transport kernel associated to $\gamma$. 
\end{proof}

We now show that if two signed measures have the same integral and mass but different signatures, then they cannot be sent to each other. 

\begin{theorem}
\label{thm:differentsignatures}
Let $a$ and $b$ be signed measures with equal mass and integral and finite moment.
Let $(z_i)_{i=1}^{n}$ be the signature of $a$ and $(w_i)_{i=1}^m$ the signature of $b$, and let $(z_i)\neq(w_i)$. 
Then there is not an optimal transport kernel $K$ sending $a$ to $b$.
\end{theorem}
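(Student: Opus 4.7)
The plan is to argue by contradiction: assume there is an optimal transport kernel $K$ with $aK=b$. Theorem \ref{thm:kernelsandsignedmeasures} then produces $c$-CM compatible optimal couplings $\gamma^+$ between $a^+$ and $b^+$, and $\gamma^-$ between $a^-$ and $b^-$. Set $\gamma:=\gamma^+ + \gamma^-$. Because $\supp(\gamma)=\supp(\gamma^+)\cup\supp(\gamma^-)$ is $c$-CM by compatibility, Theorem \ref{thm:kantdual} shows that $\gamma$ is an optimal coupling between $\alpha:=\|a\|$ and $\beta:=\|b\|$. I will derive a contradiction by showing that this $\gamma$, viewed as a one-dimensional optimal coupling, is forced to pair a positive piece of $\alpha$ with a negative piece of $\beta$ (or vice versa).

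Next I would pass to the quantile picture. With $M:=\|a\|=\|b\|$ and $G_\alpha,G_\beta$ the pseudo-inverses of $F_\alpha,F_\beta$, the optimality of $\gamma$ combined with Theorem \ref{thm:1dot} (or, in the merely convex case, Lemma \ref{lem:monotoneplanstructure} applied at the matching quantile levels) identifies $\supp(\gamma)$ with that of the monotone coupling $(G_\alpha,G_\beta)_\#(\mathscr{L}\restriction[0,M])$. Under this identification the signature intervals $(p_{i-1},p_i]$ of $a$ correspond to the quantile intervals $(M_{i-1},M_i]$ where $M_i:=\sum_{j\leq i}|z_j|$, with the sign of that block equal to $\sign(z_i)$; the analogous correspondence holds for $b$ with quantile endpoints $W_i:=\sum_{j\leq i}|w_j|$ and signs $\sign(w_i)$.

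The heart of the argument is combinatorial: since $(z_i)$ and $(w_i)$ are distinct alternating-sign sequences with the same total $\sum_i z_i=\sum_i w_i$ and the same total absolute value $\sum_i|z_i|=\sum_i|w_i|=M$, either the two starting signs differ, or there is a smallest index $k$ with $M_k\neq W_k$. In either case one can exhibit an open quantile interval $I\subset(0,M)$ on which the sign assigned by $a$'s signature disagrees with the sign assigned by $b$'s signature. For any $t\in I$ the point $(G_\alpha(t),G_\beta(t))$ lies in $\supp(\gamma)$ with, say, $G_\alpha(t)\in\supp(a^+)$ and $G_\beta(t)\in\supp(b^-)$.

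Such a point cannot lie in $\supp(\gamma^+)$ because the $y$-marginal of $\gamma^+$ is $b^+$ and $\supp(b^+)\cap\supp(b^-)=\emptyset$; nor can it lie in $\supp(\gamma^-)$ because the $x$-marginal of $\gamma^-$ is $a^-$ and $\supp(a^-)\cap\supp(a^+)=\emptyset$. This contradicts $\supp(\gamma)=\supp(\gamma^+)\cup\supp(\gamma^-)$, giving the desired contradiction. The main obstacle is the bookkeeping in the third paragraph: converting the abstract inequality ``the signatures differ'' into an explicit quantile interval of sign disagreement requires simultaneously handling a possible starting-sign mismatch, any divergence between the partial-sum sequences $(M_i)$ and $(W_i)$, and the possibility that the two signatures have different lengths $n\neq m$.
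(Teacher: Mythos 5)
Your proposal is correct and follows essentially the same route as the paper: both reduce, via Theorem~\ref{thm:kernelsandsignedmeasures}, to the fact that the kernel must act through the (monotone) optimal coupling between $\alpha$ and $\beta$, and then derive a contradiction because differing signatures force positive mass of $a$ to be matched with negative mass of $b$. Your quantile-interval bookkeeping in the third and fourth paragraphs simply supplies the detail behind the paper's one-sentence assertion of that final sign mismatch.
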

\begin{proof}
Suppose that there is an optimal transport kernel $K$ such that $aK = b$.
Since $a$ and $b$ have the same mass and integral, it must be the case that $a^+K=b^+$, and $a^-K = b^-$. 
Thus it must be the case that $\alpha K = \beta$. 

We then know that $\alpha\odot K = \gamma$ is the monotone plan between $\alpha$ and $\beta$. 
On measures that are absolutely continuous with respect to $\alpha$, we will have $\mu K = \mu K^\gamma$. 
This is the case for $a^\pm$. 

Thus it is the case that the kernel associated to the monotone plan that sends $a$ to $b$. 
But since we know that the signatures are different, there is mass from $a$ sent to mass of the opposite sign to $b$. This is a contradiction and completes the proof. 
\end{proof}

It is impossible to send two measures with equal mass  and integral, but which have different signatures, because the positive and negative components of two measures must be sent to each other.
When this is the case, the monotonicity requirement of the optimal couplings would force a mismatch as a kernel that is constrained to monotonicity cannot change the signature, (expect by destroying it). 
Throughout this section it has been important that the signed measures not only have equal integral, as would be expected from the image of a Markov kernel, but also equal mass. 
This is because an optimal transport kernel can destroy mass. 

Consider the kernel $K$ which maps $\chi_{[0,2]}$ to $2\delta_1$. 
Then the measure $a=\chi_{[0,1]}-\chi_{(1,2]}$ is a signed measure with finite mass, integral and moment, but $aK=0$. 
This is because the positive and negative parts of $a$ are sent to the same place and cancel out, but this is done without violating $c$-CM. 
Mass destruction is a feature one may want consider in future theoretical treatises  of signed optimal transport. 

\section{Conclusion}\label{sec:con}

This paper explains how Markov kernels naturally appear in optimal transport.
We showed not only that Markov kernels are natural objects to consider, but also that it is easy to use an optimal Markov kernel to obtain an optimal coupling and that we can consider an the kernel associated to an optimal coupling by restricting to measures absolutely continuous with respect to a marginal. 
By extending the theory to incorporate Markov kernels we broaden the theory and point to interesting connections like those between the Chapman-Kolmogorov equations and concatenating geodesics. 
Futher, the kernel perspective provided a way to extend optimal transport to signed measures. 
While there are restrictions that come from using Markov kernels for signed measures, they provide additional structure to the one-dimensional case. 

There is still necessary work to be done to extend the domain of kernels associated with an optimal coupling, but there is great opportunity to use this theory for new applications. 

While not touched upon here, it is a small step to see the potential in unbalanced optimal transport by opening up the theory to sub-Markov and super-Markov kernels. 
\section{Acknowledgements}
This work is partially supported by ONR \#N00014-20-1-2595. 
\bibliography{main}\bibliographystyle{plain}

\begin{thebibliography}{10}

\bibitem{ambrosio2013user}
Luigi Ambrosio and Nicola Gigli.
\newblock A user's guide to optimal transport.
\newblock In {\em Modelling and optimisation of flows on networks}, pages
  1--155. Springer, 2013.

\bibitem{ambrosio2008gradient}
Luigi Ambrosio, Nicola Gigli, and Giuseppe Savar{\'e}.
\newblock {\em Gradient flows: in metric spaces and in the space of probability
  measures}.
\newblock Springer Science \& Business Media, 2008.

\bibitem{ambrosio2011gradient}
Luigi Ambrosio, Edoardo Mainini, and Sylvia Serfaty.
\newblock Gradient flow of the {C}hapman--{R}ubinstein--{S}chatzman model for
  signed vortices.
\newblock In {\em Annales de l'IHP Analyse non lin{\'e}aire}, volume~28, pages
  217--246, 2011.

\bibitem{aurenhammer1998minkowski}
Franz Aurenhammer, Friedrich Hoffmann, and Boris Aronov.
\newblock Minkowski-type theorems and least-squares clustering.
\newblock {\em Algorithmica}, 20(1):61--76, 1998.

\bibitem{benamou2000computational}
Jean-David Benamou and Yann Brenier.
\newblock A computational fluid mechanics solution to the {M}onge-{K}antorovich
  mass transfer problem.
\newblock {\em Numerische Mathematik}, 84(3):375--393, 2000.

\bibitem{brenier1991polar}
Yann Brenier.
\newblock Polar factorization and monotone rearrangement of vector-valued
  functions.
\newblock {\em Communications on pure and applied mathematics}, 44(4):375--417,
  1991.

\bibitem{ccinlar2011probability}
Erhan {\c{C}}{\i}nlar.
\newblock {\em Probability and Stochastics}, volume 261.
\newblock Springer Science \& Business Media, 2011.

\bibitem{cuturi2013sinkhorn}
Marco Cuturi.
\newblock Sinkhorn distances: Lightspeed computation of optimal transport.
\newblock In {\em Advances in neural information processing systems}, pages
  2292--2300, 2013.

\bibitem{de2012blue}
Fernando De~Goes, Katherine Breeden, Victor Ostromoukhov, and Mathieu Desbrun.
\newblock Blue noise through optimal transport.
\newblock {\em ACM Transactions on Graphics (TOG)}, 31(6):171, 2012.

\bibitem{engquist2013application}
Bjorn Engquist and Brittany~D Froese.
\newblock Application of the {W}asserstein metric to seismic signals.
\newblock {\em arXiv preprint arXiv:1311.4581}, 2013.

\bibitem{engquist2016optimal}
Bjorn Engquist, Brittany~D Froese, and Yunan Yang.
\newblock Optimal transport for seismic full waveform inversion.
\newblock {\em arXiv preprint arXiv:1602.01540}, 2016.

\bibitem{engquist2018seismic}
Bj{\"o}rn Engquist and Yunan Yang.
\newblock Seismic imaging and optimal transport.
\newblock {\em arXiv preprint arXiv:1808.04801}, 2018.

\bibitem{kantorovich1942translocation}
Leonid~V Kantorovich.
\newblock On the translocation of masses.
\newblock In {\em Dokl. Akad. Nauk. USSR (NS)}, volume~37, pages 199--201,
  1942.

\bibitem{kitagawa2016convergence}
Jun Kitagawa, Quentin M{\'e}rigot, and Boris Thibert.
\newblock Convergence of a {N}ewton algorithm for semi-discrete optimal
  transport.
\newblock {\em arXiv preprint arXiv:1603.05579}, 2016.

\bibitem{levy2015numerical}
Bruno L{\'e}vy.
\newblock A numerical algorithm for {$L_2$} semi-discrete optimal transport in
  3d.
\newblock {\em ESAIM: Mathematical Modelling and Numerical Analysis},
  49(6):1693--1715, 2015.

\bibitem{levy2010p}
Bruno L{\'e}vy and Yang Liu.
\newblock {$L_p$} centroidal {V}oronoi tessellation and its applications.
\newblock {\em ACM Transactions on Graphics (TOG)}, 29(4):1--11, 2010.

\bibitem{levy2018notions}
Bruno L{\'e}vy and Erica~L Schwindt.
\newblock Notions of optimal transport theory and how to implement them on a
  computer.
\newblock {\em Computers \& Graphics}, 72:135--148, 2018.

\bibitem{mainini2012description}
Edoardo Mainini.
\newblock A description of transport cost for signed measures.
\newblock {\em Journal of Mathematical Sciences}, 181(6):837--855, 2012.

\bibitem{mccann1997convexity}
Robert~J McCann.
\newblock A convexity principle for interacting gases.
\newblock {\em Advances in Mathematics}, 128(1):153--179, 1997.

\bibitem{mccann2011five}
Robert~J McCann and Nestor Guillen.
\newblock Five lectures on optimal transportation: geometry, regularity and
  applications.
\newblock {\em Analysis and geometry of metric measure spaces: lecture notes of
  the s{\'e}minaire de Math{\'e}matiques Sup{\'e}rieure (SMS) Montr{\'e}al},
  pages 145--180, 2011.

\bibitem{monge1781memoire}
Gaspard Monge.
\newblock M{\'e}moire sur la th{\'e}orie des d{\'e}blais et des remblais.
\newblock {\em Histoire de l'Acad{\'e}mie Royale des Sciences de Paris}, 1781.

\bibitem{otto2001geometry}
Felix Otto.
\newblock The geometry of dissipative evolution equations: the porous medium
  equation.
\newblock {\em Communications in Partial Differential Equations}, 2001.

\bibitem{parno2019remote}
Matthew~D Parno, Brendan~A West, Arnold~J Song, Taylor~S Hodgdon, and
  DT~O'Connor.
\newblock Remote measurement of sea ice dynamics with regularized optimal
  transport.
\newblock {\em Geophysical Research Letters}, 46(10):5341--5350, 2019.

\bibitem{peyre2019computational}
Gabriel Peyr{\'e}, Marco Cuturi, et~al.
\newblock Computational optimal transport: With applications to data science.
\newblock {\em Foundations and Trends{\textregistered} in Machine Learning},
  11(5-6):355--607, 2019.

\bibitem{piccoli2013transport}
Benedetto Piccoli and Francesco Rossi.
\newblock Transport equation with nonlocal velocity in {W}asserstein spaces:
  convergence of numerical schemes.
\newblock {\em Acta Applicandae Mathematicae}, 124(1):73--105, 2013.

\bibitem{piccoli2014generalized}
Benedetto Piccoli and Francesco Rossi.
\newblock Generalized {W}asserstein distance and its application to transport
  equations with source.
\newblock {\em Archive for Rational Mechanics and Analysis}, 211(1):335--358,
  2014.

\bibitem{piccoli2016properties}
Benedetto Piccoli and Francesco Rossi.
\newblock On properties of the generalized {W}asserstein distance.
\newblock {\em Archive for Rational Mechanics and Analysis}, 222(3):1339--1365,
  2016.

\bibitem{piccoli2019wasserstein}
Benedetto Piccoli, Francesco Rossi, and Magali Tournus.
\newblock A {W}asserstein norm for signed measures, with application to
  nonlocal transport equation with source term.
\newblock {\em arXiv preprint arXiv:1910.05105}, 2019.

\bibitem{santambrogio2014introduction}
Filippo Santambrogio.
\newblock Introduction to optimal transport theory.
\newblock {\em Notes}, 2014.

\bibitem{santambrogio2015optimal}
Filippo Santambrogio.
\newblock {\em Optimal Transport for Applied Mathematicians}, volume~55.
\newblock Springer, 2015.

\bibitem{solomon2015convolutional}
Justin Solomon, Fernando De~Goes, Gabriel Peyr{\'e}, Marco Cuturi, Adrian
  Butscher, Andy Nguyen, Tao Du, and Leonidas Guibas.
\newblock Convolutional {W}asserstein distances: Efficient optimal
  transportation on geometric domains.
\newblock {\em ACM Transactions on Graphics (TOG)}, 34(4):66, 2015.

\bibitem{villani2003topics}
C{\'e}dric Villani.
\newblock {\em Topics in Optimal Transportation}.
\newblock Number~58. American Mathematical Soc., 2003.

\bibitem{villani2008optimal}
C{\'e}dric Villani.
\newblock {\em Optimal Transport: Old and New}, volume 338.
\newblock Springer Science \& Business Media, 2008.

\end{thebibliography}

\end{document}